\newcommand{\pp}{\mathcal{P}}
\begin{document}


\title*{Hereditary Polytopes}

\author{Mark Mixer, Egon Schulte and Asia Ivi\'{c} Weiss
\\[.3in]
{\it With best wishes to our friend and colleague Peter McMullen.}
}
\institute{Mark Mixer, \at York University, Department of Mathematics and Statistics, Toronto, Ontario M3J 1P3, Canada, \email{mark.mixer@gmail.com}
\and Egon Schulte, \at Northeastern University, Department of Mathematics, Boston, MA 02115, USA, \email{schulte@neu.edu}
\and Asia Ivi\'{c} Weiss, \at York University, Department of Mathematics and Statistics, Toronto, Ontario M3J 1P3, Canada, \email{weiss@mathstat.yorku.ca}}
%
%
\maketitle

\abstract{Every regular polytope has the remarkable property that it inherits all symmetries of each of its facets. This property distinguishes a natural class of polytopes which are called hereditary.  Regular polytopes are by definition hereditary, but the other polytopes in this class are interesting, have possible applications in modeling of structures, and have not been previously investigated.  This paper establishes the basic theory of hereditary polytopes, focussing on the analysis and construction of hereditary polytopes with highly symmetric faces.}

\section{Introduction}
\label{intro}

In the classical theory of convex polyhedra,  the Platonic and Archimedean solids form a natural class of highly symmetric objects.  The symmetry group of each of these polyhedra acts transitively on its vertices.  If we restrict to those solids whose symmetry groups also act transitively on their edges, only the regular polyhedra, the cuboctahedron, and the icosidodecahedron remain.  These polytopes all have the distinguishing property that every symmetry of their polygonal faces extends to a symmetry of the solid.  In fact, if we look for convex ``hereditary'' polyhedra (those having this property of inheriting all the symmetries of their faces) with regular faces, we find that vertex and edge transitivity is implied (as we shall see in a more general setting in Section 4).  

It is natural to generalize this idea of hereditary polyhedra to the setting of abstract polytopes of any rank.  In this paper we study those polytopes that have the property of inheriting all symmetries of their facets.  The formal definition of a hereditary polytope can be found in Section 2, along with other basic notions required for the understanding of this paper.

An abstract polytope of rank 3 can be seen as a map, that is a 2-cell embedding of a connected graph into a closed surface.   Regular and chiral maps have been studied extensively in the past (see for example~\cite{Conder}, \cite{cm}), and form a natural class of highly symmetric maps.  In some older literature, chiral maps are labeled as regular, as locally they are regular in the following sense.  The symmetry group of a chiral map acts transitively on the vertices, edges, and faces, and the maps have the maximal possible rotational symmetry.  However, none of the reflectional symmetry of any of the faces of a chiral map extends to a global symmetry.   Therefore chiral maps, although highly symmetric, are not hereditary in our sense.  

The non-regular hereditary maps are the 2-orbit maps which are vertex and edge transitive.  This type of map has been extensively studied (see for example~\cite{gw}, \cite{hub},\cite{stw}).  It will be shown that certain 2-orbit polytopes will always be hereditary (see Theorem~\ref{2orbit}).  However, the characterization of hereditary polytopes of rank greater than three is complex.

In Section 3, we consider how various transitivity properties of the facets affect the transitivity properties of a hereditary polytope.  Section 4 deals with polytopes with regular facets, with an emphasis on hereditary polyhedra.  In Section 5, we consider polytopes with chiral facets, and prove the existence of certain hereditary polytopes of this type.  In Section 6, some questions regarding the extensions of hereditary polytopes are considered.  We conclude with a brief section which suggests some interesting problems for related work.

\section{Basic notions}
\label{bano}

Following \cite{arp}, a \textit{polytope} $\pp$ of rank $n$, or an \textit{$n$-polytope}, is a partially ordered set of faces, with a strictly monotone rank function having range $\{-1, \ldots , n\}$. The elements of $\pp$ with rank $j$ are called \textit{$j$-faces}; typically $F_j$ indicates a $j$-face.  A \textit{chain} of type $\{i_1,\ldots,i_k\}$ is a totally ordered set faces of ranks $\{i_1,\ldots,i_k\}$.  The maximal chains of $\pp$ are called flags.   We require that $\pp$ have a smallest $(-1)$-face $F_{-1}$, a greatest $n$-face $F_n$ and that each flag contains exactly $n+2$ faces.  Also, $\pp$ should be \textit{strongly flag-connected}, that is, any two flags $\Phi$ and $\Psi$ of $\pp$ can be joined by a sequence of flags $\Phi:=\Phi_0,\Phi_1,\ldots,\Phi_k=:\Psi$ such that each $\Phi_{i}$ and $\Phi_{i+1}$ are \textit{adjacent} (in the sense that they differ by just one face), and $\Phi \cap \Psi \subseteq \Phi_i$ for each $i$. Furthermore, we require the following homogeneity property: whenever $F <G$, with rank$(F) = j-1$ and rank$(G) = j+1$, then there are exactly two $j$-faces $H$ with $F < H < G$. Essentially, these conditions say that $\pp$ shares many combinatorial properties with the face lattice of a convex polytope.

If $\Phi$ is a flag, then we denote the \textit{i-adjacent flag} by $\Phi^i$, that is the unique flag adjacent to $\Phi$ and differing from it in the face of rank $i$.  More generally, we define inductively $\Phi^{i_1,\ldots,i_k} := (\Phi^{i_1,\ldots,i_{k-1}})^{i_k}$ for $k \geq 2$.   Note that if $ | i  - j | \geq 2$, then $\Phi^{i,j} =\Phi^{j,i}$; otherwise in general, $\Phi^{i,j} \neq \Phi^{j,i}$.

The faces of rank 0, 1, and $n - 1$ are called \textit{vertices, edges, and facets}, respectively.  We will sometimes identify a facet $F_{n-1}$ with the section $F_{n-1}/F_{-1}$ when there is no chance for confusion.  If $F$ is a vertex, the section $F_n/F := \{G|F \leq G \leq F_n\}$ is called the \textit{vertex-figure} of $\pp$ at $F$.   A polytope is said to be \textit{equivelar} of (Schl\"afli) \textit{type} $\{p_1,\ldots,p_{n-1}\}$ if each section $F_{i+1} / F_{i-2}$ is combinatorially equivalent to a $p_i$-gon.  Additionally, if the facets of $\pp$ are all isomorphic to an $(n-1)$-polytope $\mathcal{K}$ and its vertex-figures are all isomorphic to an $(n-1)$-polytope $\mathcal{L}$, then we say $\pp$ is of type $\{\mathcal{K},\mathcal{L}\}$. (This is a small change of terminology from~\cite{arp}.)

The set of all automorphisms of $\pp$ is a group denoted by $\Gamma(\pp)$ and called the \textit{automorphism group} of $\pp$.   For $0 \leq i \leq n-1$, an $n$-polytope $\pp$ is called {\em $i$-face transitive\/} if $\Gamma(\pp)$ acts transitively on the set of $i$-faces of $\pp$. In addition, $\pp$ is said to be {\em $\{0,1,\ldots,i\}$-chain transitive\/} if $\Gamma(\pp)$ acts transitively on the set of chains of $\pp$ of type $\{0,1,\ldots,i\}$.

A polytope $\pp$ is said to be \textit{regular} if $\Gamma(\pp)$ acts transitively on the flags, that is if $\pp$ is $\{0,1,\ldots,n-1\}$-chain transitive.   The automorphism group of a regular $n$-polytope is known to be a \textit{string C-group} (a smooth quotient of a Coxeter group with a linear diagram, which satisfies a specified intersection condition), and is generated by involutions $\rho_0,\ldots,\rho_{n-1}$, which are called the \textit{distinguished generators} associated with a flag $\Phi$, and defined as follows.  Each $\rho_i$ maps $\Phi$ to $\Phi^i$.  These generators for a polytope of Schl\"afli type $\{p_1,\ldots,p_{n-1}\}$ satisfy relations of the form 

\begin{equation}
( \rho_i \rho_j ) ^ {p_{ij}} = \varepsilon \textrm{ for } i,j = 0,\ldots,n-1,
\end{equation} 

\noindent where $p_{ii} =1$, $p_{ij}=p_{ji}:=p_{j}$ if $j=i+1$, and $p_{ij} = 2$ otherwise.  When the sections $F_{n-1} / F_{-1}$ of a polytope $\pp$ are themselves regular, we way that $\pp$ is \textit{regular-facetted}. 

A polytope $\pp$ is said to be \textit{chiral} if there are two orbits of flags under the action of $\Gamma(\pp)$ and adjacent flags are in different orbits.  In this case, given a flag 
$\Phi = \{F_{-1}, \ldots , F_n\}$ of $\pp$ there exist automorphisms, which are also called distinguished generators, $\sigma_1, \ldots , \sigma_{n-1}$ of $\pp$ such that each $\sigma_i$ fixes all faces in $\Phi \setminus \{F_{i -1} , F_i \}$ and cyclically permutes consecutive $i$-faces of $\pp$ in the rank 2 section of $F_{i+1}/F_{i-2}$.  Each chiral polytope comes in two \textit{enantiomorphic forms}; one associated with a base flag $\Phi$ and the other with any of its adjacent flags.    When the sections $F_{n-1} / F_{-1}$ of a polytope $\pp$ are themselves chiral, we say that $\pp$ is \textit{chiral-facetted}.

A polytope $\pp$ is said to be \textit{$k$-orbit} if there are $k$ orbits of flags under the action of $\Gamma(\pp)$.  In the case of 2-orbit polytopes, if $i$-adjacent flags are in the same orbit for $i \in I$ and in different orbits for $i \not \in I$, then we say that $\pp$ is in the \textit{class} $2_I$.

Finally, a polytope $\pp$ is called {\em hereditary\/} if for each facet $F$ of $\pp$ the group $\Gamma(F/F_{-1})$ of the corresponding section $F/F_{-1}$ is a subgroup of $\Gamma(\pp)$; in fact, then $\Gamma(F/F_{-1})$ is a subgroup of $\Gamma_{F}(\pp)$, the stabilizer of $F$ in $\Gamma(\pp)$. More informally, $\pp$ is hereditary if every automorphism of every facet $F$ extends to an automorphism of $\pp$ which fixes $F$.

\section{Transitivity on faces}
\label{trafa}

We begin with a number of basic properties of hereditary polytopes which have highly-symmetric facets.

\begin{proposition}\label{ichain}
If an $n$-polytope $\pp$ is hereditary and each facet is $\{0,1,\ldots,i\}$-chain transitive for some $i$ with $i \leq n-2$, then $\pp$ is $\{0,1,\ldots,i\}$-chain transitive, and hence the $i$-faces of $\pp$ are mutually isomorphic regular $i$-polytopes.  
\end{proposition}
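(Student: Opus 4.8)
The plan is to prove directly that $\Gamma(\pp)$ acts transitively on all chains of type $\{0,1,\ldots,i\}$; the statements about the $i$-faces then follow quickly. The starting observation is that, since $i\leq n-2$, every chain of type $\{0,1,\ldots,i\}$ extends to a flag and hence lies in at least one facet of $\pp$ (all its faces have rank $\leq i\leq n-2$, so they lie below the rank $n-1$ face of the flag). Thus it suffices to show that any two such chains lying in (possibly different) facets are equivalent under $\Gamma(\pp)$. The two ingredients I would combine are heredity, which lets me promote automorphisms of a facet to automorphisms of $\pp$, and the connectivity of the facets through their $(n-2)$-faces.

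First the single-facet case. If $C$ and $C'$ are chains of type $\{0,1,\ldots,i\}$ lying in a common facet $F$, then, because $F/F_{-1}$ is $\{0,1,\ldots,i\}$-chain transitive, there is $\varphi\in\Gamma(F/F_{-1})$ with $\varphi(C)=C'$; by heredity $\varphi$ extends to an element of $\Gamma(\pp)$, so $C$ and $C'$ are $\Gamma(\pp)$-equivalent. The crucial step is to bridge two facets $F$ and $F'$ sharing an $(n-2)$-face $R$. Here I would use that, since $\mathrm{rank}(R)=n-2\geq i$, the section $R/F_{-1}$ itself contains a chain $D$ of type $\{0,1,\ldots,i\}$, and that $D$ is simultaneously a chain of type $\{0,1,\ldots,i\}$ in $F$ and in $F'$ (as $R<F$ and $R<F'$). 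Applying the single-facet case inside $F$ gives $C\sim D$ for any chain $C$ of this type in $F$, and applying it inside $F'$ gives $D\sim C'$ for any such $C'$ in $F'$; hence every chain of type $\{0,1,\ldots,i\}$ in $F$ is $\Gamma(\pp)$-equivalent to every one in $F'$. I expect this bridging step, recognizing the shared $(n-2)$-face as the carrier of chains common to two adjacent facets, to be the main point of the argument.

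It then remains to connect arbitrary facets. From strong flag-connectivity, any two facets $F$ and $F'$ are joined by a sequence of facets in which consecutive members share an $(n-2)$-face: given flags $\Phi\ni F$ and $\Psi\ni F'$ and an adjacency sequence from $\Phi$ to $\Psi$, the facets occurring in successive flags either coincide or differ only in rank $n-1$, in which case they share their common $(n-2)$-face. Chaining the bridging step along this sequence shows that all chains of type $\{0,1,\ldots,i\}$ in $\pp$ lie in a single $\Gamma(\pp)$-orbit, i.e. $\pp$ is $\{0,1,\ldots,i\}$-chain transitive.

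Finally I would deduce the statement about $i$-faces. Taking top faces of chains shows $\Gamma(\pp)$ is transitive on $i$-faces, so these are mutually isomorphic. To see that a fixed $i$-face $G$ gives a regular polytope, note that the flags of the section $G/F_{-1}$ are exactly the chains of type $\{0,1,\ldots,i\}$ with top face $G$. If $C_1,C_2$ are two such chains, chain-transitivity supplies $\gamma\in\Gamma(\pp)$ with $\gamma(C_1)=C_2$; since both have top face $G$, we get $\gamma(G)=G$, so $\gamma$ stabilizes $G$ and restricts to an automorphism of $G/F_{-1}$ carrying $C_1$ to $C_2$. Thus $\Gamma(G/F_{-1})$ is flag-transitive, so $G/F_{-1}$ is regular, completing the proof.
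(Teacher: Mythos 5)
Your proof is correct and follows essentially the same route as the paper: reduce to the transitivity of each facet's group on chains of type $\{0,1,\ldots,i\}$, promote those automorphisms to $\pp$ via heredity, and link everything up using the connectedness of the facets. The only difference is one of detail: the paper simply invokes strong flag-connectedness to produce a sequence of chains with consecutive members in a common facet, whereas you explicitly construct that sequence by passing a bridging chain through the shared $(n-2)$-face of adjacent facets, which is a legitimate (and slightly more careful) unpacking of the same step.
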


\begin{proof}
Let $\Phi$ and $\Psi$ be two chains of $\pp$ of type $\{0,1,\ldots,i\}$. Since $\pp$ is strongly flag-connected and $i\leq n-2$, there exists a sequence $\Phi:=\Phi_0,\Phi_1,\ldots,\Phi_k:=\Psi$ of chains of type $\{0,1,\ldots,i\}$ such that, for $j=1,\ldots,k$, all faces of $\Phi_{j-1}$ and ${\Phi_j}$ are incident to a common facet $H_j$.  As each facet is transitive on chains of this type, there is an automorphism of $H_j$ mapping $\Phi_{j-1}$ to $\Phi_j$.  These automorphisms of the facets $H_j$ are also automorphisms of $\pp$, and their composition maps $\Phi$ to $\Psi$.
\qed\end{proof}

In much the same way we can also prove the following proposition, again appealing to the strong flag-connectedness. 

\begin{proposition}\label{iface}
If an $n$-polytope $\pp$ is hereditary and each facet is $i$-face transitive for some $i$ with $i \leq n-2$, then $\pp$ is $i$-face transitive. In particular, if each facet is vertex transitive, then $\pp$ is vertex transitive.
\end{proposition}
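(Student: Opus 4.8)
The plan is to mimic the proof of Proposition~\ref{ichain} almost verbatim, replacing chains of type $\{0,1,\ldots,i\}$ with individual $i$-faces and replacing chain transitivity of the facets with $i$-face transitivity. Concretely, I would take two $i$-faces $F$ and $F'$ of $\pp$ and aim to produce an automorphism of $\pp$ carrying one to the other. The essential ingredients are exactly the two already invoked above: the strong flag-connectedness of $\pp$ (which survives the passage to $i$-faces because $i\leq n-2$ guarantees that every $i$-face lies in at least one facet), and the hypothesis that every facet is $i$-face transitive together with the hereditary property, which promotes each facet automorphism to a genuine automorphism of $\pp$.

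The steps, in order, are as follows. First, I would choose flags $\Phi$ and $\Psi$ of $\pp$ whose $i$-faces are $F$ and $F'$ respectively. Applying strong flag-connectedness, I obtain a sequence of flags $\Phi=\Phi_0,\Phi_1,\ldots,\Phi_k=\Psi$ with consecutive flags adjacent; passing to the $i$-faces gives a sequence $F=G_0,G_1,\ldots,G_k=F'$ of $i$-faces in which consecutive $i$-faces either coincide or are $i$-adjacent. Second, for each step where $G_{j-1}\neq G_j$, I would exhibit a facet $H_j$ of $\pp$ containing both $G_{j-1}$ and $G_j$; since $i\leq n-2$, such a facet exists (for instance the $(n-1)$-face in the adjacent flag, which is unchanged by an adjacency in rank $i\leq n-2$). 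Third, by $i$-face transitivity of $H_j$ there is an automorphism of the section $H_j/F_{-1}$ carrying $G_{j-1}$ to $G_j$; by the hereditary property this extends to an automorphism of $\pp$. Finally, composing these automorphisms across all $k$ steps yields an automorphism of $\pp$ sending $F$ to $F'$, proving $i$-face transitivity. The last sentence about vertex transitivity is then just the case $i=0$.

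The main thing to get right — the place where a careless argument would break — is the second step: ensuring that the two $i$-faces at each stage of the connecting sequence genuinely share a common facet, so that a single facet's $i$-face transitivity suffices. This is where the hypothesis $i\leq n-2$ is used, precisely as in Proposition~\ref{ichain}, and it is why the argument does not extend to $i=n-1$ (facets themselves need not be equivalent, since hereditariness only extends symmetries of a \emph{fixed} facet). I expect no real obstacle beyond this bookkeeping, since the whole argument is a direct transcription of the preceding proposition; accordingly I would keep the write-up short, noting that it proceeds ``in much the same way'' and highlighting only the choice of the common facets $H_j$ and the role of the hereditary property in lifting each facet automorphism to $\pp$.
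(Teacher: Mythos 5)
Your proposal is correct and follows essentially the same route as the paper: the paper's proof simply says to join any two $i$-faces by a sequence of $i$-faces in which consecutive ones lie in a common facet (implicitly via strong flag-connectedness, exactly as you spell out) and then to proceed as in Proposition~\ref{ichain}. Your extra care in exhibiting the common facet $H_j$ from the flag sequence, using $i\leq n-2$, is precisely the bookkeeping the paper leaves implicit.
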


\begin{proof}
Join any two $i$-faces of $\pp$ by a sequence of $i$-faces in which any two consecutive $i$-faces lie in a common facet. Then  proceed as in the previous proof.
\qed\end{proof}

Proposition~\ref{ichain} also has the following immediate consequence.

\begin{proposition}\label{corollary}
If an $n$-polytope $\pp$ is hereditary and each facet is regular, then the $(n-2)$-faces of $\pp$ are all regular $(n-2)$-polytopes mutually isomorphic under isomorphisms induced by automorphisms of $\pp$.
\end{proposition}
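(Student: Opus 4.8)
The plan is to obtain this statement as a direct specialization of Proposition~\ref{ichain} to the index $i = n-2$, once the notion of a regular facet is matched up with the appropriate chain-transitivity. First I would observe that the hypothesis ``each facet is regular'' is precisely the statement that each facet is $\{0,1,\ldots,n-2\}$-chain transitive. Indeed, a facet $F$ is an $(n-1)$-polytope whose section $F/F_{-1}$ has proper faces of ranks $0,1,\ldots,n-2$; hence a flag of this $(n-1)$-polytope is the same datum as a chain of type $\{0,1,\ldots,n-2\}$ (together with $F_{-1}$ and $F$), and flag-transitivity of the facet coincides with its $\{0,1,\ldots,n-2\}$-chain transitivity.

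With this identification in hand, I would apply Proposition~\ref{ichain} with $i = n-2$, which is admissible since $n-2 \leq n-2$. The conclusion of that proposition yields at once that $\pp$ is $\{0,1,\ldots,n-2\}$-chain transitive and that the $(n-2)$-faces of $\pp$ are mutually isomorphic regular $(n-2)$-polytopes, which already gives the bulk of the assertion.

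It then remains only to verify that the mutual isomorphisms are induced by automorphisms of $\pp$. For this I would note that $\{0,1,\ldots,n-2\}$-chain transitivity forces $\Gamma(\pp)$ to act transitively on the $(n-2)$-faces themselves, since any two $(n-2)$-faces extend to chains of type $\{0,1,\ldots,n-2\}$ lying in a single orbit. Consequently, for any two $(n-2)$-faces $F$ and $F'$ there is an automorphism $\gamma \in \Gamma(\pp)$ with $\gamma(F)=F'$; since $\gamma$ also fixes $F_{-1}$ and preserves the order relation, it carries the section $F/F_{-1}$ onto $F'/F_{-1}$ and so restricts to an isomorphism between these two $(n-2)$-polytopes. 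The same restriction argument, applied to the automorphisms of $\pp$ fixing a given $(n-2)$-face $F$, recovers the regularity of $F/F_{-1}$ directly from the chain-transitivity of $\pp$.

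Since the whole argument is a bookkeeping translation followed by an invocation of Proposition~\ref{ichain}, I expect no genuine obstacle here; the only point requiring care is the rank accounting that aligns regular facets with $\{0,1,\ldots,n-2\}$-chain transitivity and confirms that the index $i = n-2$ lies within the permitted range of the cited proposition.
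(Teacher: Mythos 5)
Your proof is correct and follows exactly the route the paper intends: the paper states this proposition as an ``immediate consequence'' of Proposition~\ref{ichain}, and your specialization to $i=n-2$, together with the identification of facet regularity with $\{0,1,\ldots,n-2\}$-chain transitivity, is precisely that deduction with the bookkeeping made explicit.
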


Our main interest is in hereditary polytopes all of whose facets are either regular or chiral. The following theorem says that any such polytope must have its facets either all regular or all chiral. In other words, the ``mixed-case" does not occur. 

\begin{theorem}
\label{nomix}
If $\pp$ is a hereditary polytope with each facet either regular or chiral, then either each facet of $\pp$ is regular or each facet of $\pp$ is chiral.
\end{theorem}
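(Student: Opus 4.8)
The plan is to reduce the global statement to a purely local one about two facets that meet in a common ridge, and then to propagate the conclusion using connectedness. Every ridge (an $(n-2)$-face) lies in exactly two facets by the homogeneity property, and from flag-connectedness one gets that any two facets of $\pp$ can be joined by a sequence of facets in which consecutive members share a ridge (this is the facet-level analogue of the connectivity used in Propositions~\ref{ichain} and~\ref{iface}). Hence it suffices to show that two ridge-sharing facets are of the same kind. The structural fact I would record at the outset is that a chiral polytope admits no automorphism carrying a flag to an adjacent flag: adjacent flags lie in distinct flag-orbits, whereas a flag and its image under any automorphism lie in one and the same orbit. Thus chirality of a facet is detected by the \emph{absence} of automorphisms realizing flag-adjacencies, and regularity by their presence.

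For the local step, fix a ridge $R$ contained in facets $F$ and $F'$, and suppose $F$ is regular. I would choose a flag $\Phi=(F_{-1},\dots,F_{n-2},F_{n-1},F_n)$ of $\pp$ with $F_{n-2}=R$ and $F_{n-1}=F$, so that its $(n-1)$-adjacent flag $\Phi^{n-1}$ replaces $F$ by $F'$ while retaining $R$. Since $\pp$ is hereditary, the distinguished reflections $\rho_0,\dots,\rho_{n-2}$ of the regular facet $F/F_{-1}$ (associated with $\Phi\cap(F/F_{-1})$) belong to $\Gamma_F(\pp)$. For any $i\le n-3$ the reflection $\rho_i$ fixes every base face of $\Phi$ of rank other than $i$; in particular it fixes $R$ and $F$, and because it fixes $R$ and merely permutes the two facets $\{F,F'\}$ lying over $R$, it must fix $F'$ as well. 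Consequently $\rho_i$ restricts to an automorphism of the section $F'/F_{-1}$.

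The heart of the argument is to identify this restriction. Since $\Phi$ and $\Phi^{n-1}$ agree in all ranks below $n-1$, and $\rho_i$ fixes the base faces $F_{-1},\dots,F_{i-1},F_{i+1},\dots,F_{n-2}=R$ together with $F'$ and $F_n$ while sending $F_i$ to the other $i$-face between $F_{i-1}$ and $F_{i+1}$, one checks directly that $\rho_i(\Phi^{n-1})=(\Phi^{n-1})^i$. Restricting to $F'$, this says precisely that $\rho_i$ carries the flag $\Phi^{n-1}\cap(F'/F_{-1})$ of $F'$ to its $i$-adjacent flag. Taking $i=0$ (which is available whenever $n\ge 3$) exhibits an automorphism of $F'$ mapping a flag to an adjacent flag, so by the structural fact $F'$ cannot be chiral; being regular or chiral by hypothesis, $F'$ is regular. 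This establishes the implication that regularity of a facet forces regularity of each ridge-neighbour, and its contrapositive forbids a chiral facet from touching a regular one across a ridge. Connectedness of the facet graph then yields a single type throughout $\pp$.

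I expect the only real obstacle to be the careful bookkeeping in the local identification $\rho_i(\Phi^{n-1})=(\Phi^{n-1})^i$, namely verifying that $\rho_i$ fixes the neighbouring facet $F'$ and genuinely realizes the $i$-adjacency \emph{inside} $F'$ rather than some unrelated flag change; once it is observed that $\Phi$ and $\Phi^{n-1}$ coincide below rank $n-1$, this reduces to tracking the action of $\rho_i$ face by face. The very low ranks should be dispatched separately: for $n=3$ the facets are polygons and hence automatically regular, so the assertion is trivial, and the substantive content lies in ranks $n\ge 4$ where chiral facets can actually occur.
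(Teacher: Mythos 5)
Your proposal is correct and follows essentially the same route as the paper's proof: reduce to two facets sharing a ridge via facet-connectedness, extend the reflection $\rho_0$ of the regular facet by heredity, observe that it fixes the ridge and hence the neighbouring facet, and conclude that it realizes a flag-adjacency inside that neighbour, which is impossible for a chiral polytope. The only cosmetic difference is that you track general $\rho_i$ with $i\le n-3$ where the paper uses only $\rho_0$, which is all that is needed.
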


\begin{proof}
Suppose $\pp$ has at least one regular facet. We prove that then each facet of $\pp$ must be regular. By the connectedness properties of $\pp$ it suffices to show that each facet adjacent to a regular facet must itself be regular.

Let $H$ be a regular facet, and let $H'$ be an adjacent facet meeting $H$ in an $(n-2)$-face~$G$. Let $\Omega$ be a flag of $H/F_{-1}$ containing $G$. Since $H$ is regular, its group $\Gamma(H/F_{-1})$ contains a ``reflection" $\rho_{0}^H$ which maps $\Omega$ to $\Omega^0$, the $0$-adjacent flag of $\Omega$ in $H/F_{-1}$. Since $\pp$ is hereditary, $\rho_{0}^H$ extends to an automorphism of $\pp$, again denoted $\rho_{0}^H$, which takes the flag $\Psi:=\Omega\cup \{F_n\}$ of $\pp$ to $\Psi^0$. But $\rho_{0}^H$ fixes $H$ and $G$, so must necessarily fix $H'$ as well and hence belong to $\Gamma(H'/F_{-1})$. Moreover, $\rho_{0}^H$ maps the flag $\Omega':=(\Omega\setminus \{H\})\cup \{H'\}$ of $H'/F_{-1}$ to its $0$-adjacent flag~$(\Omega')^0$. Thus $\Gamma(H'/F_{-1})$ contains an element which takes a flag of $H'/F_{-1}$ to an adjacent flag. On the other hand, each facet of $\pp$ is regular or chiral, so $H'$ must necessarily be regular. Bear in mind here that a chiral polytope does not admit an automorphism mapping a flag to an adjacent flag.
\qed\end{proof}

A hereditary polytope with some of its facets regular, need not have all of its facets regular. This is illustrated by the example of the semiregular tessellation $\mathcal{T}$ of Euclidean $3$-space by regular tetrahedra and (vertex) truncated regular tetrahedra. This tessellation is related to the Petrie-Coxeter polyhedron $\{6,6\,|\,3\}$. The facets (tiles) of $\mathcal{T}$ are of two kinds, namely (regular) Platonic solids and (semiregular) Archimedean solids, or more precisely, truncated Platonic solids. This tessellation is a hereditary $4$-orbit polytope of rank $4$. 

More examples arise in a similar way from the semiregular tessellations of the $3$-sphere $\mathbb{S}^3$ or hyperbolic $3$-space $\mathbb{H}^3$ related to the regular skew polyhedra $\{2l,2m\,|\,r\}$ in these spaces. Their tiles are Platonic solids $\{r,m\}$ and (vertex) truncated Platonic solids $\{l,r\}$. These tessellations can be derived by Wythoff's construction applied to the spherical or hyperbolic $4$-generator Coxeter group with square diagram  
\begin{equation}
\label{diag1}
\centering
\begin{picture}(100,40)
\put(0,-44){
\multiput(15,0)(75,0){2}{\circle*{4}}
\multiput(15,75)(75,0){2}{\circle*{4}}
\put(15,0){\circle{9}}
\put(15,75){\circle{9}}
\put(15,0){\line(1,0){75}}
\put(15,75){\line(1,0){75}}
\put(15,0){\line(0,1){75}}
\put(90,0){\line(0,1){75}}
\put(10,35){\scriptsize{$l$}}
\put(91.3,35){\scriptsize{$m$}}
\put(51,-6.5){\scriptsize{$r$}}
\put(51,78){\scriptsize{$r$}}}
\end{picture}
\end{equation}
\vskip.66in\noindent
More details, including a list of the various possible choices for $l,m,r$, can be found in~\cite{crsp,luwe,spacfi}. The semiregular tessellation $\mathcal{T}$ in $\mathbb{E}^3$ mentioned earlier is obtained in a similar fashion from the Euclidean Coxeter group given by the diagram in (\ref{diag1}) with $l=m=r=3$.

\section{Hereditary polytopes with regular facets}
\label{herreg}

In this section we investigate hereditary polytopes which are regular-facetted. We show that each such polytope is either itself regular or a $2$-orbit polytope.

\subsection{Flag-orbits}

We begin with the following observation.

\begin{proposition}\label{oddq}
Let $\pp$ be a regular-facetted hereditary $n$-polytope. If there exists an $(n-3)$-face $F$ such that its co-face $F_{n}/ F$ is a $q$-gon with $q$ odd, then $\pp$ is a regular $n$-polytope of Schl\"{a}fli type $\{p_1,\ldots,p_{n-2},q\}$, where $\{p_1,\ldots,p_{n-2}\}$ is the Schl\"afli type of any facet of $\pp$.
\end{proposition}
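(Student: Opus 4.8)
The plan is to prove regularity directly, by exhibiting automorphisms that carry a single base flag to each of its adjacent flags; strong flag-connectedness will then upgrade this to flag-transitivity, and the Schl\"afli type will follow at once. Since regularity may be tested at any flag, I would start by choosing a base flag $\Phi=\{F_{-1},F_0,\ldots,F_{n-1},F_n\}$ whose $(n-3)$-face $F_{n-3}$ is the given face $F$, so that the section $F_n/F$ is the prescribed $q$-gon. The two facets of $\pp$ lying between $F_{n-2}$ and $F_n$ are precisely the two edges of this $q$-gon incident with the vertex $F_{n-2}$; write $F_{n-1}$ and $F_{n-1}'$ for them, so that $\Phi^{n-1}=(\Phi\setminus\{F_{n-1}\})\cup\{F_{n-1}'\}$.

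The first step supplies all generators below the top rank. The base facet $F_{n-1}$ is regular, so $\Gamma(F_{n-1}/F_{-1})$ contains reflections $\rho_0^{F_{n-1}},\ldots,\rho_{n-2}^{F_{n-1}}$ sending the flag $\Phi\setminus\{F_n\}$ of $F_{n-1}/F_{-1}$ to its respective $i$-adjacent flag. As $\pp$ is hereditary, each extends to an automorphism $\rho_i$ of $\pp$ fixing $F_{n-1}$, hence also $F_n$; since $\Phi^i$ (for $i\le n-2$) differs from $\Phi$ only inside the facet, we get $\rho_i(\Phi)=\Phi^i$ for $i=0,\ldots,n-2$. It therefore remains only to realize the top adjacency, i.e. to produce an automorphism taking $\Phi$ to $\Phi^{n-1}$.

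This last step is the crux, and the only place the oddness of $q$ enters. For odd $q$ the vertex $F_{n-2}$ of the polygon $F_n/F$ has a unique opposite edge, a facet $H^{*}$ containing $F$. Being regular, $H^{*}$ possesses the reflection $\rho_{n-2}^{H^{*}}$ interchanging the two $(n-2)$-faces lying between $F$ and $H^{*}$, which I would take with respect to a flag of $H^{*}$ through $F_0,\ldots,F_{n-4},F$ so that it fixes these lower faces. By heredity it extends to $\rho_{n-1}\in\Gamma(\pp)$ fixing $H^{*}$. Since $\rho_{n-1}$ fixes $F$ and $F_n$, it restricts to an element of the dihedral group of the $q$-gon $F_n/F$; as it fixes the edge $H^{*}$ and swaps its two endpoints, it must be the reflection of the polygon in the axis through (the midpoint of) $H^{*}$. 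For odd $q$ that axis also passes through the opposite vertex $F_{n-2}$, so $\rho_{n-1}$ fixes $F_{n-2}$ and interchanges the two incident edges $F_{n-1},F_{n-1}'$. Together with the fact that it fixes $F_{-1},F_0,\ldots,F_{n-4},F=F_{n-3}$ and $F_n$, this yields $\rho_{n-1}(\Phi)=\Phi^{n-1}$.

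Having mapped $\Phi$ to each of $\Phi^0,\ldots,\Phi^{n-1}$, strong flag-connectedness completes the proof: any flag is joined to $\Phi$ by a sequence of adjacencies, and since automorphisms commute with the operations $\Psi\mapsto\Psi^i$, an induction along such a sequence places every flag in the orbit of $\Phi$. Thus $\pp$ is regular; being regular it is equivelar, and as the base flag was chosen with $F_{n-3}=F$, the section determining the last entry of the type is the $q$-gon $F_n/F$, so $\pp$ is of type $\{p_1,\ldots,p_{n-2},q\}$ with $\{p_1,\ldots,p_{n-2}\}$ the type of its regular facets. The genuine obstacle is the middle step: for even $q$ an edge-reflection of the $q$-gon fixes the opposite edge rather than a vertex, so there is no reason to obtain a facet-swapping automorphism fixing $F_{n-2}$—consistent with the fact that for even $q$ the polytope may instead be a $2$-orbit polytope.
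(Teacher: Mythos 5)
Your proof is correct and rests on the same key idea as the paper's: for odd $q$ the reflection of the $q$-gonal co-face $F_n/F$ in an edge $H^{*}$ also fixes the opposite vertex, and heredity supplies this reflection (as the extension of $\rho_{n-2}^{H^{*}}$) together with the distinguished generators of the base facet. The paper packages this slightly differently, decomposing the stabilizer $\Gamma_F(\pp)$ as $\Gamma(F/F_{-1})\times\Gamma(F_n/F)$ and invoking $(n-3)$-face transitivity, whereas you exhibit automorphisms realizing all $n$ flag adjacencies directly; both are sound and essentially equivalent.
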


\begin{proof}
The proof exploits the fact that for odd $q$ the dihedral group $D_q$ has just one conjugacy class of reflections. Geometrically speaking this means that the reflection mirror bisecting an edge of a convex regular $q$-gon also bisects the angle at the opposite vertex. This conjugacy class then generates $D_q$.

First observe that, by Proposition~\ref{iface}, the group $\Gamma(\pp)$ is transitive on the $(n-3)$-faces of $\pp$ since $\pp$ has regular facets. (This already implies that {\em each\/} co-face of an $(n-3)$-face is a $q$-gon.)  Now, if we can show that the stabilizer of an $(n-3)$-face in $\Gamma(\pp)$ acts transitively on the flags of $\pp$ containing that $(n-3)$-face, then clearly $\Gamma(\pp)$ acts flag-transitively on $\pp$ and hence $\pp$ must be regular.

Now suppose $F$ is an $(n-3)$-face such that $F_{n}/ F$ is a $q$-gon. Clearly, since the facets of $\pp$ are regular, $F$ is also regular and its group $\Gamma(F/F_{-1})$ is a subgroup of the automorphism group of any facet of $\mathcal{P}$ that contains $F$. Moreover, since $\pp$ is hereditary, $\Gamma(F/F_{-1})$ is also a subgroup of $\Gamma(\pp)$ acting flag-transitively on $F/F_{-1}$ and trivially on $F_{n}/F$. 

On the other hand, if $H$ is any facet of $\pp$ containing $F$, then there exists a unique involution $\rho_{n-2}^H$ (say) in $\Gamma(H/F_{-1})$ which fixes a flag of $F/F_{-1}$ and interchanges the two $(n-2)$-faces of $H$ containing $F$. Now, since $q$ is odd, the reflections $\rho_{n-2}^H$, with $H$ a facet containing $F$, generate a subgroup isomorphic to the dihedral group $D_q$. Hence, since this subgroup acts flag-transitively on $F_{n}/F$ and trivially on $F/F_{-1}$, it can be identified with $\Gamma(F_{n}/F)$. 

Then $\Gamma_{F}(\pp)=\Gamma(F/F_{-1})\times\Gamma(F_{n}/F)$, and $\Gamma_{F}(\pp)$ acts transitively on the flags of $\pp$ that contain $F$.
\qed\end{proof}

The following theorem says that the hereditary polytopes with regular facets fall into two classes.

\begin{theorem}\label{2orbit}
A regular-facetted $n$-polytope is hereditary if and only if it is regular or a 2-orbit polytope in the class  $2_{\{0,1,\ldots,n-2\}}$.
\end{theorem}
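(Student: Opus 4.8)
The plan is to prove both implications, treating the easier converse first and reserving the main idea for the forward direction. For the converse, suppose first that $\pp$ is regular. Then the distinguished generators $\rho_0,\ldots,\rho_{n-2}$ associated with the base flag generate the automorphism group of the base facet and already fix that facet, so every facet automorphism lies in $\Gamma(\pp)$; hence $\pp$ is hereditary (and trivially regular-facetted). Now suppose instead that $\pp$ is a $2$-orbit polytope in class $2_{\{0,1,\ldots,n-2\}}$. I would fix a facet $H$ and note that, since only the index $n-1$ is excluded from $I$, every $i$-adjacency with $i\le n-2$ preserves flag-orbits; by strong flag-connectedness all flags containing $H$ are linked by such adjacencies and therefore lie in one $\Gamma(\pp)$-orbit. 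Any automorphism carrying one such flag to another must fix $H$ (it sends the facet of the first, namely $H$, to the facet of the second, again $H$), so $\Gamma_{H}(\pp)$ acts transitively on the flags of $H$. This forces $H$ to be regular, giving regular-facettedness, and, since a transitive subgroup of the (simply transitive) automorphism group of a regular polytope must be the whole group, the restriction map $\Gamma_{H}(\pp)\to\Gamma(H/F_{-1})$ is onto; thus every facet automorphism extends and $\pp$ is hereditary.

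For the forward direction, assume $\pp$ is regular-facetted and hereditary and fix a base flag $\Phi$ with facet $H$. The first key step is to show that $\Phi$ and $\Phi^{i}$ lie in the same flag-orbit for every $i\le n-2$. Since $H$ is regular, its reflection $\rho_{i}^{H}$ sends the facet-flag of $\Phi$ to its $i$-adjacent flag, and heredity extends $\rho_{i}^{H}$ to an automorphism of $\pp$ fixing $H$ (hence also $F_{n}$), which therefore carries $\Phi$ to $\Phi^{i}$. The same argument applied to an arbitrary flag $\Psi$ shows $\Psi\sim\Psi^{i}$ for all $i\le n-2$, since the facet of every flag is regular.

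The second, decisive step analyses the remaining adjacency $i=n-1$. Let $O$ be the orbit of $\Phi$ and $O'$ the orbit of $\Phi^{n-1}$. For any $\Psi=g\Phi\in O$ the equivariance $g(\Phi^{n-1})=(g\Phi)^{n-1}$ gives $\Psi^{n-1}=g\,\Phi^{n-1}\in gO'=O'$, and symmetrically $(n-1)$-adjacency sends $O'$ back into $O$; thus $(n-1)$-adjacency interchanges $O$ and $O'$. Combined with the first step, this shows $O\cup O'$ is closed under every adjacency, so by strong flag-connectedness it exhausts all flags, and $\pp$ has at most two flag-orbits. If $O=O'$ the polytope is flag-transitive, i.e.\ regular; if $O\ne O'$ it has exactly two orbits, with $i$-adjacent flags in the same orbit precisely for $i\le n-2$, so it lies in class $2_{\{0,1,\ldots,n-2\}}$.

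I expect the main obstacle to be making the extension step fully rigorous, in particular verifying that the hereditary extension of $\rho_{i}^{H}$ not only fixes $H$ but, acting on all of $\pp$, sends $\Phi$ exactly to $\Phi^{i}$; this rests on the fact that for $i\le n-2$ the $i$-adjacent flag alters only faces interior to the facet while $F_{n-1}$ and $F_{n}$ are fixed. The orbit-interchange argument is then short, but it is where the class $2_{\{0,1,\ldots,n-2\}}$ is pinned down, and it crucially exploits the setwise invariance of orbits under $\Gamma(\pp)$ together with the equivariance of adjacency.
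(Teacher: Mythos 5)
Your proof is correct, but the forward direction takes a genuinely different route from the paper's. The paper first reduces to the action of the stabilizer of an $(n-3)$-face $F$ (using $(n-3)$-face transitivity from Proposition~\ref{iface}), identifies a dihedral subgroup $\Lambda$ generated by the involutions $\rho_{n-2}^H$ attached to the facets $H$ containing $F$, and then splits into cases according to whether the co-face $F_n/F$ is a $q$-gon with $q$ odd (regularity, via Proposition~\ref{oddq}) or $q$ even or infinite (exactly two orbits, with $\Gamma_F(\pp)=\Gamma(F/F_{-1})\times\Lambda$). You instead run a global orbit-closure argument on the flag graph: heredity plus regularity of every facet gives $\Psi\sim\Psi^i$ for all flags $\Psi$ and all $i\le n-2$, the equivariance $g(\Phi^{n-1})=(g\Phi)^{n-1}$ shows that $(n-1)$-adjacency interchanges the orbits $O$ of $\Phi$ and $O'$ of $\Phi^{n-1}$, and flag-connectedness forces $O\cup O'$ to be everything; the dichotomy $O=O'$ versus $O\ne O'$ then pins down regularity versus class $2_{\{0,1,\ldots,n-2\}}$ in one stroke. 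Your argument is more elementary and self-contained --- it bypasses Propositions~\ref{iface} and~\ref{oddq} and the parity case analysis entirely --- whereas the paper's stabilizer computation buys additional structural information (the product decomposition of $\Gamma_F(\pp)$ and the fact that an odd co-face forces regularity) that is reused elsewhere, e.g.\ in the discussion of hereditary polyhedra and in the chiral-facetted analogue in Section~\ref{herchir}. Your converse is essentially the paper's one-line argument, usefully fleshed out: the surjectivity of the restriction $\Gamma_H(\pp)\to\Gamma(H/F_{-1})$ via simple transitivity is exactly the right way to make ``all flags containing a common facet lie in one orbit implies hereditary'' precise.
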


\begin{proof}
Let $\pp$ be a regular-facetted hereditary $n$-polytope. As before, $\pp$ is $(n-3)$-face transitive. Let $F$ be any $(n-3)$-face of $\pp$. We must show that the stabilizer $\Gamma_{F}(\pp)$ has at most two orbits on the set of flags of $\pp$ containing $F$. Since $F$ is regular and $\pp$ is hereditary, $\Gamma(F/F_{-1})$ is again a subgroup of $\Gamma_F(\pp)$ acting flag-transitively on $F/F_{-1}$ and trivially on $F_{n}/F$. 

As in the previous proof, for each facet $H$ of $\pp$ containing $F$, there exists a unique involution $\rho_{n-2}^H$ (say) in $\Gamma(H/F_{-1})$ which fixes a flag of $F/F_{-1}$ and interchanges the two $(n-2)$-faces of $H$ containing $F$. Suppose the co-face $F_{n}/F$ is a $q$-gon, allowing $q=\infty$. By Proposition~\ref{oddq}, if $q$ is odd, then $\pp$ is regular and we are done. 

Now suppose $\pp$ is not regular. Then $q$ is even or $q=\infty$. In this case the subgroup $\Lambda$ of $\Gamma_{F}(\pp)$ generated by the involutions $\rho_{n-2}^H$, with $H$ a facet containing $F$, is isomorphic to a dihedral group $D_{q/2}$; when restricted to the $q$-gonal co-face $F_{n}/F$, these  involutions $\rho_{n-2}^H$ generate a subgroup of index $2$ in the full dihedral automorphism group $D_q$ of $F_{n}/F$. Hence $\Lambda$, restricted to $F_{n}/F$, has two flag-orbits on $F_{n}/F$. It follows that $\Gamma_{F}(\pp)=\Gamma(F/F_{-1})\times\Lambda$, and that $\Gamma_{F}(\pp)$ has two orbits on the flags of $\pp$ that contain $F$. Thus $\pp$ is a $2$-orbit polytope. Moreover, since $\pp$ is hereditary and the facets of $\pp$ are regular, 
$\Gamma(\mathcal{P})$ contains the distinguished generators for the group of any facet of $\pp$, so $\pp$ is necessarily of type $2_I$ with $\{0,\ldots,n-2\}\subseteq I$. On the other hand, since $\pp$ itself is not regular, no flag can be mapped onto its $(n-1)$-adjacent flag by an automorphism of $\pp$. Hence $\pp$ must be a $2$-orbit polytope in the class $2_{\{0,1,\ldots,n-2\}}$. 

Conversely, if $\pp$ is in the class  $2_{\{0,1,\ldots,n-2\}}$, then it has regular facets, and since all flags that contain a mutual facet are in the same orbit, it is hereditary.

\qed\end{proof}

Note that every $2$-orbit polytope $\pp$ in the class $2_{\{0,1,\ldots,n-2\}}$ necessarily has regular facets, generally of two different kinds. In particular, $\pp$ has a generalized Schl\"afli symbol of the form
\[ \Big\{p_1, \dots, p_{n-3}, \begin{array}{l} p_{n-2} \\ q_{n-2} \end{array}\Big\},\]
where $\{p_1,\dots, p_{n-3},p_{n-2}\}$ and $\{p_1,\dots, p_{n-3},q_{n-2}\}$ are the ordinary Schl\"afli symbols for the two kinds of facets (see \cite{hubsch}). This is a generalization of the classical Schl\"afli symbol used in Coxeter~\cite{cox2} for semiregular convex polytopes.  
\smallskip

We now describe some examples of regular-facetted hereditary polytopes of low rank, concentrating mainly on rank $3$. All regular polytopes are hereditary and (trivially) regular-facetted, so we consider only non-regular polytopes, which, as we just proved, must necessarily be $2$-orbit polytopes in the class $2_{\{0,1,\ldots,n-2\}}$.

\subsection{Hereditary polyhedra}
\label{hpolyhedra}

Since all abstract 2-polytopes (polygons) are regular, by Theorem~\ref{2orbit}, each hereditary polyhedron is (trivially) regular-facetted and hence is either regular or a $2$-orbit polyhedron in the class $2_{\{0,1\}}$. Both the cuboctahedron and the icosidodecahedron can easily be seen to be hereditary polyhedra. In fact, these are the only hereditary polyhedra amongst the Archimedean solids. Similarly, the uniform Euclidean plane tessellation of type $(3.6)^2$ is an infinite hereditary polyhedron (see \cite{grsh}). 

Recall that the {\em medial\/} of a polyhedron (map) $\pp$ on a closed surface is the polyhedron $\rm{Me}(\pp)$ on the same surface whose vertices are the ``midpoints" of the edges of $\pp$, and whose edges join two vertices if and only if the corresponding edges of $\pp$ are adjacent edges of a face of $\pp$. All three examples of hereditary polyhedra just mentioned can be constructed as medials of regular maps, namely of the cube $\{4,3\}$, the dodecahedron $\{5,3\}$, and the euclidean plane tessellation $\{6,3\}$, respectively. 

More generally, given a regular polyhedron $\pp$ of type $\{p,q\}$, the medial $\rm{Me}(\pp)$ is a hereditary polyhedron, and $\rm{Me}(\pp)$ is regular if and only if $\pp$ is self-dual (see \cite[Theorem~4.1]{OPW}). This can be quickly seen algebraically. If the automorphism group of the original polyhedron is $\Gamma(\pp) = \langle \rho_0,\rho_1,\rho_2 \rangle$ (say), then $\Gamma(\rm{Me}(\pp))$ is isomorphic to $\Gamma(\pp)$ if $\pp$ is not self-dual, or $\Gamma(\pp) \ltimes C_2$ if $\pp$ is self-dual (this latter group is just the extended group of $\pp$, consisting of all automorphisms and dualities of $\pp$). When $\pp$ is not self-dual, there are generally two kinds of facets of $\rm{Me}(\pp)$, namely $p$-gons corresponding to conjugate subgroups of $\langle \rho_0, \rho_1 \rangle$ in $\Gamma(\pp)$, and $q$-gons corresponding to conjugate subgroups of $\langle \rho_1,\rho_2 \rangle$ in $\Gamma(\pp)$; in particular, when $q=p$ all facets of $\rm{Me}(\pp)$ are $p$-gons, so $\rm{Me}(\pp)$ has facets of just one type (we describe an example below). This is also true when $\pp$ is self-dual; however, in this case the two subgroups are conjugate in the extended group of $\pp$ (under a polarity fixing the base flag). In either case, $\rm{Me}(\pp)$ is hereditary since the two kinds of conjugate subgroups in $\Gamma(\pp)$ are also subgroups of $\Gamma(\rm{Me}(\pp))$.

Using the medial construction, we can find a finite hereditary polyhedron with only one isomorphism type of facet, which, although it has a Schl\"afli symbol, is not regular.  Consider a non self-dual polyhedron of type $\{p,p\}$, for example the polyhedron $\pp$ of type $\{5,5\}_{12}$ denoted as ``N98.6" by Conder~\cite{Conder} (or as $\{5,5\}*1920b$ by Hartley~\cite{Hartley}). The medial of $\pp$ is a polyhedron of type $\{5,4\}$ with the same automorphism group, of order $1920$, but with twice as many flags. Thus this polyhedron is not regular, but it is still hereditary and of type $2_{\{0,1\}}$.

The previous example is also of independent interest with regards to the following problem about the lengths of certain distinguished paths in its edge graph.  

\begin{remark}\label{rem}
In Problem~7 of~\cite{banff}, it is asked to what extent a regular or chiral polyhedron of type $\{p,q\}$ is determined by the lengths of its $j$-holes and the lengths of its $j$-zigzags.  The polyhedron $\mathcal{P}$ with 1920 flags, mentioned above, has Petrie polygons (1-zigzags) of length 12,\ 2-zigzags of length 5, and 2-holes of length 12.  Thus, we say it is of type $\{5,5\,|\, 12 \}_{12,5}$ (see \cite[p.\,196]{arp}).  However, calculation in \textsc{Magma}~\cite{magma} shows that the universal polyhedron of this type has 30720 flags. This gives an example of a regular polyhedron which is not determined by the lengths of all of its j-holes and the lengths of all of its j-zigzags.
\end{remark}

Every non-regular hereditary polyhedron $\pp$, by Proposition~\ref{oddq}, has vertex-figures which are $2q$-gons for some $q$. In particular, by Theorem~\ref{2orbit}, $\pp$ is a $2$-orbit polyhedron in class $2_{\{0,1\}}$.  Additionally, Theorem 4.2 of~\cite{OPW} shows that any $2$-orbit polyhedron in class $2_{\{0,1\}}$ is the medial of a regular map if and only if $q=2$.  

However, there are non-regular hereditary polyhedra which are not medials of regular maps.  We now define a class of such examples via a map operation which we call ``generalized halving."  The halving operation itself is described in Section~\ref{halv}.  If $\mathcal{K}$ is a regular map of type $\{2p,q\}$ whose edge graph is bipartite, then we define a hereditary polyhedron $\mathcal{K}^a$ (on the same surface as $\mathcal{K}$) as follows; here ``a" stands for ``alternating vertices" (see also Section~\ref{halv}).   Suppose that the vertices of $\mathcal{K}$ are colored {\em red\/} and {\em yellow\/} such that adjacent vertices have different colors.  The vertex-figures at the red vertices of $\mathcal{K}$ (obtained by joining the yellow vertices adjacent to a given red vertex in cyclic order) form one class of facets of $\mathcal{K}^a$.  The other class of facets of $\mathcal{K}^a$ is defined by joining the yellow vertices of a facet of $\mathcal{K}$ whenever they are adjacent to the same red vertex in that facet. The resulting polyhedron has facets of type $\{p\}$ and $\{q\}$, and vertex-figures of type $\{2q\}$.  The polyhedron $\mathcal{K}^a$ is in the class $2_{\{0,1\}}$, and thus is hereditary.

Hereditary polyhedra can be seen as quotients of the uniform tessellations $(p.q)^r$ of the sphere, Euclidean plane, or hyperbolic plane, which can be derived by Wythoff's construction from the $(p,q,r)$ extended triangle group as indicated below (see~\cite{cox2}).

\begin{equation}
\label{diag2}
\centering
\begin{picture}(100,40)
\put(0,-30){
\put(15,0){\circle*{4}}
\put(15,60){\circle*{4}}
\put(60,30){\circle*{4}}
\put(15,0){\line(0,1){60}}
\put(15,60){\circle{9}}
\put(15,0){\line(3,2){45}}
\put(15,60){\line(3,-2){45}}
\put(8,27){\scriptsize{$p$}}
\put(35,52){\scriptsize{$q$}}
\put(35,7){\scriptsize{$r$}}
}
\end{picture}
\end{equation}
\vskip.5in\noindent 

\noindent In particular, the hereditary polyhedra arising as medials of regular maps of type $\{p,q\}$ are quotients of the above infinite tessellations constructed from the $(p, q, 2)$ extended triangle groups.  Similarly, the polyhedra arising from our generalized halving construction of a regular map of type $\{2p,q\}$ are quotients of the infinite tessellations constructed from the $(p, q, q)$ extended triangle groups.

Moving on to rank $4$, we observe that the semi-regular tessellation of Euclidean 3-space by regular tetrahedra and octahedra gives a simple example of a regular-facetted hereditary polytope which is not regular. Its geometric symmetry group is a subgroup of index $2$ in the symmetry group of the cubical tessellations of $3$-space. Note that the combinatorial automorphism group of either tessellation is isomorphic to its symmetry group. 

\section{Hereditary polytopes with chiral facets}
\label{herchir}

When a hereditary polytope has chiral facets, its rank is at least $4$. In this section we show that any such polytope has either two or four flag-orbits. 

\subsection{Flag-orbits}
\label{flobfc}

Call an abstract polytope $\mathcal{P}$ {\em equifacetted\/} if its facets are mutually isomorphic. All regular or chiral polytopes are equifacetted. A $2$-orbit $n$-polytope in a class $2_I$ with $n-1\in I$ is also equifacetted. 

\begin{theorem}
\label{chirfac}
A chiral-facetted hereditary $n$-polytope is a $2$-orbit polytope which is either chiral or in class $2_{\{n-1\}}$ (and hence is equifacetted), or a $4$-orbit polytope. 
\end{theorem}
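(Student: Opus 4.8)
The plan is to reduce everything to counting two quantities: the number of orbits of $\Gamma(\pp)$ on the facets of $\pp$, and the number of orbits of the stabilizer $\Gamma_{H}(\pp)$ of a facet $H$ on the flags of $\pp$ containing $H$. First I would observe that, because $H/F_{-1}$ is chiral and $\pp$ is hereditary, the restriction homomorphism $\Gamma_{H}(\pp)\to\Gamma(H/F_{-1})$ is onto; and since the co-face $F_{n}/H$ is trivial, the action of $\Gamma_{H}(\pp)$ on the flags of $\pp$ through $H$ coincides with the action of $\Gamma(H/F_{-1})$ on the flags of $H/F_{-1}$. As a chiral polytope has exactly two flag-orbits, $\Gamma_{H}(\pp)$ has exactly two orbits on the flags through $H$. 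The map $\Psi\mapsto$ (the $(n-1)$-face of $\Psi$) is a surjective $\Gamma(\pp)$-equivariant map from flags to facets, so the number of flag-orbits of $\pp$ equals twice the number of facet-orbits.

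It then remains to show that $\pp$ has at most two facet-orbits, which I would establish by counting the orbits on chains of type $\{n-2,n-1\}$ in two ways. By Proposition~\ref{iface}, since every chiral facet is transitive on faces of each rank $\le n-2$, $\pp$ is $(n-2)$-face transitive. Fixing an $(n-2)$-face $G$, the homogeneity (diamond) property gives exactly two facets $H,H'$ containing $G$, and $\Gamma_{G}(\pp)$ either fixes or interchanges them; hence there are at most two orbits of $\{n-2,n-1\}$-chains. On the other hand, mapping such a chain to its facet and using that $\Gamma_{H}(\pp)$ surjects onto $\Gamma(H/F_{-1})$ — which is transitive on the facets of $H$, that is, on the $(n-2)$-faces of $\pp$ contained in $H$ — shows that each facet-orbit yields exactly one orbit of $\{n-2,n-1\}$-chains. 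Comparing the two counts forces the number of facet-orbits to be $1$ or $2$, so $\pp$ has $2$ or $4$ flag-orbits.

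Finally I would identify the $2$-orbit case. If there is a single facet-orbit, then $\pp$ is facet transitive (hence equifacetted) and has two flag-orbits; the two orbits of $\Gamma_{H}(\pp)$ on flags through $H$ must lie in distinct global orbits, since otherwise all flags would be mutually equivalent. Because $H/F_{-1}$ is chiral, its $i$-adjacent flags lie in different orbits for every $i\le n-2$, and as these adjacencies are inherited by $\pp$, the flags $\Psi$ and $\Psi^{i}$ of $\pp$ lie in different orbits for all $i\le n-2$. Thus if $\pp$ is in class $2_{I}$ we get $I\subseteq\{n-1\}$, so $\pp$ is either chiral ($I=\emptyset$) or in class $2_{\{n-1\}}$, as claimed.

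The main obstacle is precisely the control of the number of facet-orbits, i.e.\ the two-way enumeration of $\{n-2,n-1\}$-chains, where one must play the diamond bound (at most two facets through a given $(n-2)$-face) against heredity together with the facet-transitivity of a chiral polytope on its own facets. Verifying that restriction to a chiral facet is genuinely surjective, and that the co-face above a facet is trivial so that no hidden automorphisms enlarge the local flag-action, is exactly what makes the identity ``flag-orbits $=2\times$ facet-orbits'' valid.
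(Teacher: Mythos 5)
Your proof is correct, and it takes a genuinely different route from the paper's. The paper fixes an $(n-3)$-face $F$, introduces for each facet $H\supseteq F$ the involution $\tau_{0,n-2}^{H}$, and analyses the dihedral group these generate on the $q$-gonal co-face $F_n/F$, splitting into the cases $q$ odd and $q$ even to count flag-orbits directly. You instead observe that heredity makes the restriction $\Gamma_H(\pp)\to\Gamma(H/F_{-1})$ surjective, so each facet-stabilizer has exactly two orbits on the flags through that facet (chirality of the facet), whence flag-orbits $=2\times$ facet-orbits; you then bound the number of facet-orbits by $2$ via a double count of orbits on $\{n-2,n-1\}$-chains, playing the diamond condition (at most two orbits, by $(n-2)$-face transitivity from Proposition~\ref{iface}) against the facet-transitivity of each chiral facet on its own $(n-2)$-faces (exactly one chain-orbit per facet-orbit). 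This is cleaner and avoids the parity case distinction entirely; your identification of the class in the $2$-orbit case (local orbits of $\Gamma_H(\pp)$ biject with global orbits when there is one facet-orbit, so $\Psi$ and $\Psi^i$ are inequivalent for all $i\le n-2$) is also sound. What the paper's more hands-on argument buys is the explicit orbit representatives $\Psi,\Psi^0,\Psi^{n-1},\Psi^{n-1,0}$ in the $4$-orbit case, which the authors use immediately afterwards in Section~5.3; these also follow from your setup (the two local orbits over each of the two facet-orbits are represented by a $0$-adjacent pair, and the two facet-orbits by an $(n-1)$-adjacent pair), but you would need to add a sentence to extract them.
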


\begin{proof}
Let $\pp$ be a chiral-facetted hereditary $n$-polytope. First note that we must have $n\geq 4$, since the facets of polytopes of rank at most $3$ are always regular, not chiral. By Proposition~\ref{iface}, the polytope $\pp$ is $(n-3)$-face transitive since its facets are $(n-3)$-face transitive. In particular, any flag of $\pp$ is equivalent under $\Gamma(\pp)$ to a flag containing a fixed $(n-3)$-face $F$ of $\pp$. Again we employ the action of the stabilizer $\Gamma_F(\pp)$ on the set of flags of $\pp$ containing $F$. 

Let $F$ be an $(n-3)$-face of $\pp$, and let $\Omega$ be a flag of the section $F/F_{-1}$. For each facet $H$ of $\pp$ containing $F$ there exists a unique involution $\tau_{0,n-2}^H$ (say) in $\Gamma(H/F_{-1})$ which interchanges the two $(n-2)$-faces of $H$ containing $F$ while fixing all faces of $\Omega$ except the $0$-face. Let $\Lambda$ denote the subgroup of $\Gamma_F(\pp)$ generated by the involutions $\tau_{0,n-2}^H$, with $H$ a facet containing $F$. Now suppose again that the $2$-polytope $F_{n}/F$ is a $q$-gon, allowing $q=\infty$. When restricted to the co-face $F_{n}/F$, the involutions $\tau_{0,n-2}^H$ act like reflections in perpendicular bisectors of edges of a convex regular $q$-gon, and so the restricted $\Lambda$ is isomorphic to a dihedral group $D_q$ or $D_{q/2}$ according as $q$ is odd or even. Hence~$\Lambda$, restricted to $F_{n}/F$, has one or two flag-orbits on the $2$-polytope $F_{n}/F$, respectively; in the latter case the two flag-orbits can be represented by a pair of $1$-adjacent flags of $F_{n}/F$. Note, however, that unlike in the case of hereditary polytopes with regular facets, $\Lambda$ does not act trivially on the $(n-3)$-face $F/F_{-1}$. (In fact, each $\tau_{0,n-2}^H$ maps $\Omega$ to $\Omega^0$, the $0$-adjacent flag, so the restriction of $\Lambda$ to $F/F_{-1}$ is a group $C_2$.)  

Now let $G$ be an $(n-2)$-face of $\pp$ incident with $F$, and let $H$ and $H'$ denote the two facets of $\pp$ meeting at $G$. Then $\Phi:=\Omega \cup \{G,H,F_n\}$ is a flag of $\pp$ containing $F$. Note that $\{F,G,H,F_n\}$ and $\{F,G,H',F_n\}$ are $1$-adjacent flags of the $q$-gon $F_{n}/F$ which are contained in $\Phi$ or $\Phi^{n-1}$, respectively. Now let $\Psi$ be any flag of $\pp$ containing $F$. Then two possible scenarios can occur.

First suppose $q$ is odd. Then since $\Lambda$ acts flag-transitively on $F_{n}/F$, the flag $\Psi$ can be mapped by an element of $\Lambda$ to a flag $\Psi'$ containing $\{F,G,H,F_n\}$. Then $\Psi'\setminus \{F_n\}$ is a flag of the facet $H/F_{-1}$, and since $H/F_{-1}$ is chiral, it can be taken by an automorphism of $H/F_{-1}$ to either the flag $\Phi\setminus \{F_n\}$ of $H/F_{-1}$ or the $j$-adjacent flag $(\Phi\setminus \{F_n\})^j$, for any $j=0,\ldots,n-2$. But $\pp$ is hereditary, so the extension of this automorphism to $\pp$ then necessarily maps $\Psi'$ to either $\Phi$ or $\Phi^j$. On the other hand, the two flags $\Phi$ and $\Phi^j$ are not equivalent under $\Gamma(\pp)$, since otherwise the facets would be regular, not chiral. Thus $\Gamma(\pp)$ has two flag-orbits represented by any pair of $j$-adjacent flags, with $j=0,\ldots,n-2$. Hence $\pp$ is a $2$-orbit polytope, either of type $2_\emptyset$ and then $\pp$ is chiral, or of type $2_{\{n-1\}}$. (Note that our arguments do not require the above automorphisms to belong to $\Gamma_{F}(\pp)$; in fact, when $j=n-3$, and possibly when $j=n-2$ with $n\geq 5$, they will not lie $\Gamma_{F}(\pp)$.)

Next suppose $q$ is even. Now $\Lambda$ acts with two flag-orbits on $F_{n}/F$, so $\Psi$ can be mapped under $\Lambda$ to a flag $\Psi'$ which either contains $\{F,G,H,F_n\}$ or  $\{F,G,H',F_n\}$. In the former case, $\Psi'$ is as above equivalent to $\Phi$ or $\Phi^j$, for any $j=0,\ldots,n-2$, again under the extension of a suitable automorphism of the chiral facet $H/F_{-1}$ to $\pp$. In the latter case, $\Psi'$ is equivalent to $\Phi^{n-1}$ or $\Phi^{n-1,j}$, for any $j=0,\ldots,n-2$, now under the extended automorphism of the $(n-1)$-adjacent facet $H'/F_{-1}$ of $H/F_{-1}$ in $\pp$.  As before, $\Phi$ and $\Phi^j$ cannot be equivalent under $\Gamma(\pp)$, and neither can $\Phi^{n-1}$ and $\Phi^{n-1,j}$. Moreover, $\Phi$ is equivalent to $\Phi^{n-1}$ or $\Phi^{n-1,j}$ respectively, if and only if $\Phi^j$ is equivalent $\Phi^{n-1,j}$ or $\Phi^{n-1}$. Hence $\pp$ has two or four flag-orbits. If there are four flag-orbits, then these can be represented by $\Phi,\Phi^j,\Phi^{n-1},\Phi^{n-1,j}$, and we are done. Otherwise $\pp$ is a $2$-orbit polytope with its two flag-orbits represented by $\Phi$ and $\Phi^j$. In this case $\pp$ is either of type $2_\emptyset$ and then $\pp$ is chiral, or of type $2_{\{n-1\}}$; accordingly, $\Phi$ and $\Phi^{n-1}$ represent different, or the same, flag-orbits under $\Gamma(\pp)$. In either case we are done as well, and the proof is complete.
\qed\end{proof} 

Note that the proof of Theorem~\ref{chirfac} shows that the four flag-orbits of a chiral-facetted hereditary $4$-orbit $n$-polytope $\pp$ can be represented by the four flags $\Psi,\Psi^0,\Psi^{n-1},\Psi^{n-1,0}$, where $\Psi$ is any flag of $\pp$.

In rank $4$, many examples of chiral polytopes with chiral facets are known (see \cite{bjs,chp,SW2}). These are chiral-facetted hereditary polytopes of the first kind mentioned in Theorem~\ref{chirfac}. By contrast, it is not at all clear that chiral-facetted hereditary polytopes of the two other kinds actually exist (for any rank $n\geq 4$). In the remainder of this section we establish the existence of such examples. We show that there is a wealth of chiral-facetted hereditary $2$-orbit polytopes in the class $2_{\{n-1\}}$ for any $n\geq 4$; and that chiral-facetted hereditary $4$-orbit polytopes exist at least in rank $4$.

\subsection{Chiral-facetted hereditary $n$-polytopes in class $2_{\{n-1\}}$}
\label{chirfacher}

We begin by briefly reviewing the cube-like polytopes $2^\mathcal{K}$ originally due to Danzer (see \cite{dan,esext} and \cite[Section 8D]{arp}).  

Let $\mathcal{K}$ be a finite abstract $(n-1)$-polytope with vertex-set $V:=\{1,\ldots,v\}$ (say). Suppose $\mathcal{K}$ is {\em vertex-describable\/}, meaning that its faces are uniquely determined by their vertex-sets. Thus we may identify the faces of $\mathcal{K}$ with their vertex-sets, which are subsets of $V$. Then $2^\mathcal{K}$ is a (vertex-describable) abstract $n$-polytope with $2^v$ vertices, each with a vertex-figure isomorphic to $\mathcal{K}$. The vertex-set of $2^\mathcal{K}$ is 
\begin{equation}
\label{twov}
2^V := \bigotimes_{i=1}^{v} \{0,1\} ,
\end{equation}
the cartesian product of $v$ copies of $\{0,1\}$. When $j\geq 1$ we take as $j$-faces of $2^\mathcal{K}$, for any $(j-1)$-face $F$ of $\mathcal{K}$ and any $\varepsilon:=(\varepsilon_1,\ldots,\varepsilon_v)$ in $2^{V}$, the subsets $F(\varepsilon)$ of $2^V$ defined by
\begin{equation}
\label{fep}
F(\varepsilon) := \{(\eta_1,\ldots,\eta_{v})\in 2^V\! \mid \eta_{i} = \varepsilon_{i} \mbox{ if } i\not\in F\}, 
\end{equation}
or, abusing notation, by the cartesian product
\[ F(\varepsilon) := \left( \bigotimes_{i \in F} \{0,1\} \right) 
\times \left( \bigotimes_{i \not\in F} \{\varepsilon_i\} \right). \]
Then, if $F$, $F'$ are faces of $\mathcal{K}$ and $\varepsilon=(\varepsilon_1,\ldots,\varepsilon_v)$, $\varepsilon' =(\varepsilon_{1}',\ldots,\varepsilon_{v}')$ are elements in $2^{V}$, we have $F(\varepsilon) \subseteq F'(\varepsilon')$ in $2^\mathcal{K}$ if and only if $F \leq F'$ in $\mathcal{K}$ and $\varepsilon_i = \varepsilon_{i}'$ for each $i$ not in $F'$. The least face of $2^\mathcal{K}$ (of rank $-1$) is the empty set. Note that the vertices $\varepsilon$ of $2^\mathcal{K}$ arise here as singletons in the form $F(\varepsilon)=\{\varepsilon\}$ when $F=\emptyset$, the least face of $\mathcal{K}$. Notice that if $\mathcal{K}$ is the $(n-1)$-simplex, then $2^\mathcal{K}$ is the $n$-cube.

Each $j$-face of $2^\mathcal{K}$ is isomorphic to a $j$-polytope $2^\mathcal{F}$, where $\mathcal{F}$ is a $(j-1)$-face of $\mathcal{K}$. More precisely, if $F$ is a $(j-1)$-face of $\mathcal{K}$ and $\mathcal{F}:=F/F_{-1}$, then each $j$-face $F(\varepsilon)$ with $\varepsilon$ in $2^V$ is isomorphic to $2^\mathcal{F}$.

The automorphism group of $2^\mathcal{K}$ is given by 
\begin{equation}
\label{gr2k}
\Gamma(2^\mathcal{K}) \,\cong\, C_{2}\wr \Gamma(\mathcal{K}) \,\cong\, C_{2}^{\,v}\rtimes \Gamma(\mathcal{K}), 
\end{equation}
the wreath product of $C_2$ and~$\Gamma(\mathcal{K})$ defined by the natural action of $\Gamma(\mathcal{K})$ on the vertex-set of $\mathcal{K}$. In particular, $\Gamma(2^\mathcal{K})$ acts vertex-transitively on $2^\mathcal{K}$ and the vertex stabilizers are isomorphic to $\Gamma(\mathcal{K})$. Moreover, each automorphism of every vertex-figure of $2^\mathcal{K}$ extends to an automorphism of the entire polytope $2^\mathcal{K}$.

The following theorem summarizes properties of $2^\mathcal{K}$ that are relevant for our discussion of hereditary polytopes.

\begin{theorem}
\label{prop2k}
Let $\mathcal{K}$ be a finite abstract $(n-1)$-polytope with $v$ vertices, and let $\mathcal{K}$ be vertex-describable. Then $2^\mathcal{K}$ is a finite abstract $n$-polytope with the following properties.\\[.04in]
(a) If $\mathcal{K}$ is a $k$-orbit polytope for $k\geq 1$, then $2^\mathcal{K}$ is also a $k$-orbit polytope.\\[.02in]
(b) If $\mathcal{K}$ is regular, then $2^\mathcal{K}$ is regular.\\[.02in]
(c) If $\mathcal{K}$ is a $2$-orbit polytope in class $2_I$ for $I\subseteq \{0,\ldots,n-2\}$, then $2^\mathcal{K}$ is a $2$-orbit polytope in class $2_{J}$ for $J:=\{0\} \cup \{i+1\mid i\in I\}$. \\[.02in]
(d) If $\mathcal{K}$ is chiral, then $2^\mathcal{K}$ is a $2$-orbit polytope in class $2_{\{0\}}$.
\end{theorem}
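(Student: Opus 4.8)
The plan is to analyze the action of $\Gamma(2^\mathcal{K})\cong C_2^{\,v}\rtimes\Gamma(\mathcal{K})$ on the flags through a fixed vertex, exploiting the defining feature of chirality: adjacent flags of $\mathcal{K}$ always lie in distinct $\Gamma(\mathcal{K})$-orbits. First I would record the equivariant dictionary between flags of $2^\mathcal{K}$ and pairs $(\varepsilon,\omega)$, where $\varepsilon\in 2^V$ is a vertex and $\omega$ is a flag of the vertex-figure at $\varepsilon$ (which is isomorphic to $\mathcal{K}$). Concretely, a flag $\Phi=\{F_{-1},F_0,\dots,F_n\}$ has $F_0=\{\varepsilon\}$, and the faces $F_1,\dots,F_n$ of rank $\geq 1$ through $\varepsilon$ determine, and are determined by, a flag of $\mathcal{K}$ via the vertex-figure isomorphism sending $F_j$ to the $(j-1)$-face of $\mathcal{K}$. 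Here the $C_2^{\,v}$ factor acts by coordinate flips on $\varepsilon$ and the $\Gamma(\mathcal{K})$ factor permutes coordinates and acts on $\omega$. Since $C_2^{\,v}$ is transitive on $2^V$ and the stabilizer of the vertex $\mathbf{0}$ is $\Gamma(\mathcal{K})$ acting on flags through $\mathbf{0}$ exactly as on flags of $\mathcal{K}$, the number of flag-orbits of $2^\mathcal{K}$ equals that of $\mathcal{K}$, namely two (this is also immediate from (a)). Hence $2^\mathcal{K}$ is a $2$-orbit polytope, and it remains only to identify its class $2_I$.

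Fix the base flag $\Phi$ corresponding to $\mathbf{0}$ and to a base flag $\Omega=\{\emptyset,G_0,G_1,\dots,G_{n-1}\}$ of $\mathcal{K}$, so that $F_j=G_{j-1}(\mathbf{0})$ for $j\geq 1$. To show $0\in I$, write $G_0=\{k\}$ and consider the element $t\in C_2^{\,v}$ flipping the $k$-th coordinate. Since $k\in G_{j-1}$ for every $j\geq 1$, the translation $t$ fixes each $F_j=G_{j-1}(\mathbf{0})$ with $j\geq 1$, while it sends the vertex $F_0=\{\mathbf{0}\}$ to the second vertex of the edge $F_1$, namely the point obtained from $\mathbf{0}$ by flipping coordinate $k$. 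Thus $t(\Phi)=\Phi^0$, so $\Phi$ and $\Phi^0$ lie in one orbit and $0\in I$.

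For $i\geq 1$ I would argue instead that $\Phi^i$ lies in a different orbit. Since $i\geq 1$, the flag $\Phi^i$ retains $F_0=\{\mathbf{0}\}$ as its vertex, so any automorphism carrying $\Phi$ to $\Phi^i$ must fix $\mathbf{0}$ and therefore lie in the vertex-stabilizer $\Gamma(\mathcal{K})$. Under the vertex-figure dictionary, $\Phi$ and $\Phi^i$ correspond to the flags $\Omega$ and $\Omega^{i-1}$ of $\mathcal{K}$, which are $(i-1)$-adjacent. As $\mathcal{K}$ is chiral, adjacent flags of $\mathcal{K}$ never share a $\Gamma(\mathcal{K})$-orbit, so no such automorphism exists. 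Hence $i\notin I$ for every $i\in\{1,\dots,n-1\}$, and together with the previous step this gives $I=\{0\}$, i.e.\ $2^\mathcal{K}$ is in class $2_{\{0\}}$.

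The main obstacle is the bookkeeping of the first step: one must verify carefully that the flag-to-$(\varepsilon,\omega)$ correspondence is genuinely equivariant and, crucially, that $i$-adjacency in $2^\mathcal{K}$ (for $i\geq 1$) matches $(i-1)$-adjacency in $\mathcal{K}$ under the vertex-figure isomorphism. Once this dictionary is in place, the two orbit computations are short. A secondary point to confirm is that the stabilizer of $\mathbf{0}$ acts on flags through $\mathbf{0}$ with precisely the orbits of $\Gamma(\mathcal{K})$ on flags of $\mathcal{K}$; this follows from the already-stated facts that vertex-stabilizers in $2^\mathcal{K}$ are isomorphic to $\Gamma(\mathcal{K})$ and that every automorphism of a vertex-figure extends to $2^\mathcal{K}$.
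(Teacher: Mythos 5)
Your proposal is correct and follows essentially the same route as the paper: vertex-transitivity plus the identification of the vertex-stabilizer with $\Gamma(\mathcal{K})$ gives the flag-orbit count (part (a)), the single coordinate flip supplies the automorphism realizing $0$-adjacency, and the rank shift by one reduces the remaining adjacencies to those of $\mathcal{K}$. The only difference is emphasis — you spell out the flag dictionary and concentrate on the chiral case (d), whereas the paper states the shift more briefly and derives (d) from (c) with $I=\emptyset$ — but the substance is identical.
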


\begin{proof}
For part (a), since $\Gamma(2^\mathcal{K})$ acts vertex-transitively on $2^\mathcal{K}$, every flag-orbit under $\Gamma(2^\mathcal{K})$ can be represented by a flag containing the vertex $o:=(0,\ldots,0)$ of $2^\mathcal{K}$. Moreover, since the vertex stabilizer of $o$ is isomorphic to $\Gamma(\mathcal{K})$, two flags containing $o$ are equivalent in $\Gamma(2^\mathcal{K})$ if and only if they are equivalent in $\Gamma(\mathcal{K})$. Thus the number of flag-orbits of $\mathcal{K}$ and $2^\mathcal{K}$ is the same. This proves part (a). For part (b), simply apply part (a) with $k=1$. 

For part (c), suppose $\mathcal{K}$ is a $2$-orbit polytope in class $2_I$. Then part (a) shows that $2^\mathcal{K}$ is also a $2$-orbit polytope. Choose a flag $\Psi:=\{F_0,F_1,\ldots,F_{n-2}\}$ of $\mathcal{K}$ and consider the corresponding flag 
$\Phi:=\{o,F_0(o),F_1(o),\ldots,F_{n-2}(o)\}$ of $2^\mathcal{K}$ which contains $o$ (we are suppressing the least face and the largest face). In $\mathcal{K}$, the $i$-adjacent flags $\Psi,\Psi^i$ of $\mathcal{K}$ lie in the same orbit under $\Gamma(\mathcal{K})$ if and only if $i\in I$. Relative to $2^\mathcal{K}$, the adjacency levels of $\mathcal{K}$ are shifted by $1$. Hence, if $j\geq 1$, then a pair of $j$-adjacent flags $\Phi,\Phi^j$ of $2^{\mathcal{K}}$ lie in the same orbit under $\Gamma(2^\mathcal{K})$ if and only if $j\in \{i+1\mid i\in I\}$. In addition, the $0$-adjacent flags $\Phi,\Phi^0$ of $2^{\mathcal{K}}$ always are equivalent under $\Gamma(2^\mathcal{K})$; in fact, the mapping on $2^V$ defined by 
\[ (\varepsilon_1,\varepsilon_2,\ldots,\varepsilon_v) \longrightarrow 
(\varepsilon_{1}+1,\varepsilon_2,\ldots,\varepsilon_v),\]
with addition mod $2$ in the first component, induces an automorphism of $2^\mathcal{K}$ taking $\Phi$ to $\Phi^0$. Thus, $\Phi$ and $\Phi^j$ are in the same flag-orbit of $2^\mathcal{K}$ if and only if $j\in J$. This proves part (c). For part (d), apply part (c) with $I=\emptyset$. 
\qed\end{proof}

Appealing to duality, the previous theorem now allows us to settle the existence of chiral-facetted hereditary $n$-polytopes in class $2_{\{n-1\}}$. Call an abstract polytope $\mathcal{Q}$ {\em facet-describable} if each face of $\mathcal{Q}$ is uniquely determined by the facets of $\mathcal{Q}$ that are incident with it. Thus, $\mathcal{Q}$ is facet-describable if and only if its dual $\mathcal{Q}^*$ is vertex-describable. Just like vertex-describability, facet-describability is a relatively mild assumption on a polytope. Any polytope that is a lattice, is both 
vertex-describable and facet-describable.

\begin{corollary}
\label{cor}
Let $\mathcal{Q}$ be a finite chiral $(n-1)$-polytope, and let $\mathcal{Q}$ be facet-describable. Then $(2^{\mathcal{Q}^{\,*}})^*$ is a chiral-facetted hereditary $2$-orbit $n$-polytope in class $2_{\{n-1\}}$ with facets isomorphic to $\mathcal{Q}$. Moreover, 
\[\Gamma((2^{\mathcal{Q}^{\,*}})^*) \,\cong\, C_{2}\wr \Gamma(\mathcal{Q}) \,\cong\, C_{2}^{\,f}\rtimes \Gamma(\mathcal{Q}),\] 
where $f$ is the number of facets of $\mathcal{Q}$.
\end{corollary}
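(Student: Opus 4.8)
The plan is to reduce everything to Theorem~\ref{prop2k} by dualizing. First I would set $\mathcal{K} := \mathcal{Q}^{*}$. Since $\mathcal{Q}$ is facet-describable, its dual $\mathcal{K}$ is vertex-describable, so $2^{\mathcal{K}} = 2^{\mathcal{Q}^{*}}$ is defined and Theorem~\ref{prop2k} applies. Moreover $\mathcal{K}$ is again chiral: duality reverses the partial order but preserves the number of flag-orbits and the property that adjacent flags lie in distinct orbits, so the dual of a chiral polytope is chiral. Also $\mathcal{K}$ has $v = f$ vertices, where $f$ is the number of facets of $\mathcal{Q}$. By Theorem~\ref{prop2k}(d), $2^{\mathcal{K}}$ is a $2$-orbit polytope in class $2_{\{0\}}$.

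Next I would pass back to the dual $(2^{\mathcal{K}})^{*} = (2^{\mathcal{Q}^{*}})^{*}$. Under duality a face of rank $i$ becomes a face of rank $n-1-i$, so $i$-adjacency of flags corresponds to $(n-1-i)$-adjacency; hence the class $2_{\{0\}}$ of $2^{\mathcal{K}}$ turns into the class $2_{\{n-1\}}$ of $(2^{\mathcal{K}})^{*}$, while the number of flag-orbits (two) is unchanged. The facets of $(2^{\mathcal{K}})^{*}$ are the duals of the vertex-figures of $2^{\mathcal{K}}$, and since every vertex-figure of $2^{\mathcal{K}}$ is isomorphic to $\mathcal{K}$, every facet of $(2^{\mathcal{K}})^{*}$ is isomorphic to $\mathcal{K}^{*} = \mathcal{Q}$, which is chiral. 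Thus $(2^{\mathcal{Q}^{*}})^{*}$ is a chiral-facetted $2$-orbit polytope in class $2_{\{n-1\}}$ with facets isomorphic to $\mathcal{Q}$.

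The key point is heredity, and here I would again argue by duality. For $2^{\mathcal{K}}$ the group $\Gamma(2^{\mathcal{K}}) \cong C_{2}^{\,v} \rtimes \Gamma(\mathcal{K})$ acts vertex-transitively with vertex stabilizers isomorphic to $\Gamma(\mathcal{K})$, and the stabilizer of a vertex acts on its vertex-figure as the full automorphism group of that vertex-figure; consequently every automorphism of every vertex-figure of $2^{\mathcal{K}}$ extends to an automorphism of $2^{\mathcal{K}}$ \emph{fixing} that vertex. Dualizing this statement yields exactly that every automorphism of every facet of $(2^{\mathcal{K}})^{*}$ extends to an automorphism of $(2^{\mathcal{K}})^{*}$ fixing that facet, which is the definition of hereditary. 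Finally, a polytope and its dual have the same automorphism group, so $\Gamma((2^{\mathcal{Q}^{*}})^{*}) = \Gamma(2^{\mathcal{Q}^{*}}) \cong C_{2} \wr \Gamma(\mathcal{Q}) \cong C_{2}^{\,f} \rtimes \Gamma(\mathcal{Q})$, using $\Gamma(\mathcal{Q}^{*}) \cong \Gamma(\mathcal{Q})$ and $v = f$.

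I expect the main obstacle to be the heredity argument: one must verify carefully that the extension of a vertex-figure automorphism can be chosen to fix the underlying vertex (this follows from the semidirect-product description of $\Gamma(2^{\mathcal{K}})$ together with vertex-transitivity), and then check that dualization correctly converts ``fixing a vertex'' into ``fixing a facet.'' The remaining bookkeeping---the behavior of flag-adjacency classes, the preservation of chirality, the interplay of vertex- versus facet-describability, and the automorphism group under duality---is routine once the adjacency-index shift $i \mapsto n-1-i$ is pinned down.
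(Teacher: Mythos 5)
Your proposal is correct and follows essentially the same route as the paper: apply Theorem~\ref{prop2k}(d) to the vertex-describable chiral dual $\mathcal{Q}^{*}$, then dualize back, noting that facets of the dual are duals of the vertex-figures, that the class $2_{\{0\}}$ becomes $2_{\{n-1\}}$, that heredity follows from the extension property of vertex-figure automorphisms of $2^{\mathcal{Q}^{*}}$, and that dual polytopes share the same group. Your extra care about the extension fixing the vertex and the adjacency-index shift is sound but not a departure from the paper's argument.
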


\begin{proof}
The dual $\mathcal{Q}^*$ of $\mathcal{Q}$ is chiral and vertex-describable. By Theorem~\ref{prop2k}, the polytope $2^{\mathcal{Q}^{\,*}}$ has $2$-orbits and belongs to class $2_{\{0\}}$. Hence its dual, $(2^{\mathcal{Q}^{\,*}})^*$, is a $2$-orbit polytope in class $2_{\{n-1\}}$. Its facets are the duals of the vertex-figures of $2^{\mathcal{Q}^{\,*}}$. Thus the facets of $(2^{\mathcal{Q}^{\,*}})^*$ are isomorphic to $\mathcal{Q}$ and hence are chiral. Moreover, $(2^{\mathcal{Q}^{\,*}})^*$ is hereditary, since every automorphism of every vertex-figure of $2^{\mathcal{Q}^{\,*}}$  extends to an automorphism of the entire polytope $2^{\mathcal{Q}^{\,*}}$. The second part of the corollary follows from (\ref{gr2k}), bearing in mind that $f$ is just the number of vertices of $\mathcal{Q}^*$ and that dual polytopes have the same group.
\qed\end{proof}

Chiral polytopes are known to exist for every rank greater than or equal to $3$ (see Pellicer~\cite{pell}). We strongly suspect that most polytopes constructed in \cite{pell} are also facet-describable.  Corollary~\ref{cor} provides an $n$-polytope of the desired kind for every $n\geq 4$ for which there exists a finite chiral $(n-1)$-polytope which is facet-describable. For $n=4$ or $5$ there are many such examples. 

\subsection{Chiral-facetted hereditary polytopes with four-orbits}

In this section we describe a construction of ``alternating" polytopes which is inspired by the methods in Monson \& Schulte~\cite{monsch} and provides examples of chiral-facetted hereditary $4$-polytopes with four flag-orbits. 

\subsubsection{Halving of polyhedra}
\label{halv}

We begin by reviewing a construction of polyhedra which arises from the halving operation $\eta$ of \cite[Section 7B]{arp} described below; it can be considered as a special case of the construction given in~\ref{hpolyhedra}.

Let $\mathcal{K}$ be an equivelar map of type $\{4,q\}$ whose edge graph is bipartite. Then every edge circuit in $\mathcal{K}$ has even length.  Suppose that the vertices of $\mathcal{K}$ are colored {\em red\/} and {\em yellow\/} such that adjacent vertices have different colors. The vertex-figures at the red vertices of $\mathcal{K}$ (obtained by joining the vertices adjacent to a given red vertex in cyclic order) form the faces of a map of type $\{q,q\}$ (which is usually a polyhedron) on the same surface as the original map. Its vertices and ``face centers" are the yellow and red vertices of $\mathcal{K}$, respectively; its edges are the ``diagonals" in (square) faces of $\mathcal{K}$ that join yellow vertices. 

When the two colors are interchanged, we similarly obtain another map of type $\{q,q\}$, the dual of the first map, which a priori need not be isomorphic to the first map. However, if $\mathcal{K}$ admits an automorphism swapping the two color-classes of vertices, then these maps are isomorphic; this holds, for example, if the original polyhedron $\mathcal{K}$ is vertex-transitive. In our applications this will always be the case, and in such instances we denote the map by $\mathcal{K}^a$ (with the ``a" standing for ``alternate vertices").

We now impose symmetry conditions on $\mathcal{K}$. First let $\mathcal{K}$ be regular, and let $\Gamma(\mathcal{K}) =\langle\alpha_0,\alpha_1,\alpha_2\rangle$, where $\alpha_0,\alpha_1,\alpha_2$ are the distinguished generators. From the {\em halving operation\/} 
\begin{equation}
\label{eta}
\eta:\, (\alpha_0,\alpha_1,\alpha_2) \rightarrow 
(\alpha_0\alpha_1\alpha_0,\alpha_2,\alpha_1) =: (\beta_0,\beta_1,\beta_2) ,
\end{equation}
we then obtain the new generators $\beta_0,\beta_1,\beta_2$ for the automorphism group of a self-dual regular polyhedron $\mathcal{K}^\eta$ of type $\{q,q\}$, which is a subgroup of index $2$ in $\Gamma(\mathcal{K})$ (see \cite[Section 7B]{arp}); bear in mind here that the edge graph of $\mathcal{K}$ is bipartite and that $(\alpha_{0}\alpha_{1})^{4}=\varepsilon$. This polyhedron can be drawn as a map on the same surface as $\mathcal{K}$ by employing Wythoff's construction with generators $\beta_0,\beta_1,\beta_2$ and base vertex $z$ (say) of $\mathcal{K}$. Then it is easily seen that $\mathcal{K}^\eta$ is just the polyhedron $\mathcal{K}^a$ described earlier, realized here with $z$ as a yellow vertex of $\mathcal{K}$. 

Notice that replacing $\eta$ by 
\begin{equation}
\label{etaalt}
\eta^0:\, (\alpha_0,\alpha_1,\alpha_2) \rightarrow 
(\alpha_1,\alpha_2,\alpha_0\alpha_1\alpha_0) =: (\gamma_0,\gamma_1,\gamma_2) 
\end{equation}
results in another set of generators $\gamma_0,\gamma_1,\gamma_2$, which are conjugate under $\alpha_0$ to $\beta_0,\beta_1,\beta_2$.  When Wythoff's construction is applied with these new generators and base vertex $\alpha_{0}(z)$ adjacent to $z$, we similarly arrive at a regular polyhedron $\mathcal{K}^{\eta^{0}}$ on the same surface which is dually positioned to $\mathcal{K}^{\eta}$,  has its vertices at the red vertices of $\mathcal{K}$, and is isomorphic to $\mathcal{K}^a$. Note that the new generators $\gamma_0,\gamma_1,\gamma_2$ in (\ref{etaalt}) can be found from $\alpha_0,\alpha_1,\alpha_2$ in one of two equivalent ways:\ either as in $\eta^0$ by first applying $\eta$ and then conjugating the $\beta_j$'s by $\alpha_0$, or by first conjugating the $\alpha_j$'s by $\alpha_0$ and then applying $\eta$ to these new generators. 

If $\mathcal{K}$ is chiral, we can work with corresponding operations at the rotation group level, again denoted by $\eta$ and $\eta^0$. Suppose $\Gamma(\mathcal{K}) =\langle\sigma_1,\sigma_2\rangle$, where $\sigma_1,\sigma_2$ are the distinguished generators. Then the two operations
\begin{equation}
\begin{array}{rlll}
\label{etachir}
\eta: & (\sigma_1,\sigma_2) &\rightarrow &
(\sigma_{1}^2\sigma_{2},\sigma_{2}^{-1}) =: (\varphi_{1},\varphi_{2}) \\[.04in]
\eta^0:& (\sigma_1,\sigma_2) & \rightarrow &
(\sigma_{2},\sigma_{2}^{-1}\sigma_{1}^{2}) =: (\psi_{1},\psi_{2}) 
\end{array}
\end{equation}
give a pair of self-dual chiral maps of type $\{q,q\}$ each isomorphic to $\mathcal{K}^a$. These maps can be drawn on the same underlying surface by employing a variant of Wythoff's construction, now applied with the new generators of (\ref{etachir}) and with either $z$ or $\sigma_{1}(z)$ as base vertex. The two maps are again dually positioned relative to each other. The vertex $z$ of $\mathcal{K}$ is a vertex of $\mathcal{K}^\eta$, but not of $\mathcal{K}^{\eta^0}$. Hence, if $z$ is a yellow vertex of $\mathcal{K}$, then $\mathcal{K}^\eta$ uses only yellow vertices of $\mathcal{K}$ while $\mathcal{K}^{\eta^0}$ uses only red vertices of $\mathcal{K}$. In analogy to what we said about the operations in (\ref{eta}) and (\ref{etaalt}), the new generators $\psi_{1},\psi_{2}$ in $\eta^0$ of (\ref{etachir}) can be found from $\sigma_1,\sigma_2$ in one of two equivalent ways:\ either as in $\eta^0$ by first applying $\eta$ and then passing to the generators for the other enantiomorphic form of $\mathcal{K}^\eta$, or by first passing to the generators for the other enantiomorphic form of $\mathcal{K}$ and then applying $\eta$ to these new generators. 

\subsubsection{Alternating chiral-facetted $4$-polytopes} 

Following \cite{monsch}, an $n$-polytope is said to be {\em alternating\/} if it has facets of possibly two distinct combinatorial isomorphism types appearing in alternating fashion around faces of rank $n-3$. We allow the possibility that the two isomorphism types coincide, although we are less interested in this case. The cuboctahedron is an example of an alternating polyhedron in which triangles and squares alternate around a vertex. 

A more interesting example is the familiar semiregular tiling $\mathcal{T}$ of Euclidean $3$-space $\mathbb{E}^3$ by regular octahedra and tetrahedra illustrated in Figure~\ref{calT}, which is an alternating $4$-polytope in which octahedra and tetrahedra alternate around an edge (see \cite{cox2,monsch}). Its vertex-figures are (alternating) cuboctahedra. More generally it is true that the vertex-figures of an alternating $n$-polytopes are alternating $(n-1)$-polytopes. From now on, we restrict ourselves to polytopes of rank $3$ or $4$.

\hspace{3cm}
\begin{figure}
\psset{xunit=0.009cm,yunit=0.009cm,algebraic=true,dotstyle=o,dotsize=3pt 0,linewidth=0.8pt,arrowsize=3pt 2,arrowinset=0.25}
\begin{pspicture*}(-14.93,-16.81)(1019.62,839.08)
\pspolygon[linestyle=none,fillstyle=solid,fillcolor=yellow,opacity=0.55](64.7,537.1)(213.47,189.38)(517.94,71.74)
\pspolygon[linestyle=none,fillstyle=solid,fillcolor=ffffzz,opacity=0.5](671.91,639.17)(64.7,537.1)(517.94,71.74)
\psline[linestyle=dotted,linecolor=yellow](64.7,537.1)(213.47,189.38)
\psline[linestyle=dotted,linecolor=yellow](213.47,189.38)(517.94,71.74)
\psline[linestyle=dotted,linecolor=yellow](517.94,71.74)(64.7,537.1)
\psline[linecolor=ffffzz](671.91,639.17)(64.7,537.1)
\psline[linecolor=ffffzz](64.7,537.1)(517.94,71.74)
\psline[linecolor=ffffzz](517.94,71.74)(671.91,639.17)
\psline(215.2,644.36)(671.91,639.17)
\psline(671.91,639.17)(671.91,187.65)
\psline(517.94,71.74)(671.91,187.65)
\psline[linewidth=1.6pt,linecolor=red](365.71,364.1)(59.51,78.66)
\psline[linewidth=1.6pt,linecolor=ffttqq](365.71,364.1)(287.86,76.93)
\psline[linewidth=1.6pt,linecolor=ffttqq](365.71,364.1)(517.94,71.74)
\psline[linewidth=1.6pt,linecolor=ffttqq](517.94,71.74)(59.51,78.66)
\psline[linewidth=1.6pt,linestyle=dashed,dash=2pt 2pt,linecolor=ffttqq](367.79,129.76)(59.51,78.66)
\psline[linestyle=dashed,dash=2pt 2pt,linecolor=ffttqq](671.91,187.65)(367.79,129.76)
\psline[linewidth=1.6pt,linestyle=dashed,dash=2pt 2pt,linecolor=ffttqq](367.79,129.76)(365.71,364.1)
\psline[linestyle=dashed,dash=2pt 2pt,linecolor=ffttqq](671.91,639.17)(365.71,364.1)
\psline[linestyle=dashed,dash=2pt 2pt,linecolor=ffttqq](367.44,748.15)(365.71,364.1)
\psline(64.7,537.1)(59.51,78.66)
\psline(64.7,537.1)(215.2,644.36)
\psline[linestyle=dotted](215.2,644.36)(213.47,189.38)
\psline(64.7,537.1)(519.67,533.64)
\psline(519.67,533.64)(671.91,639.17)
\psline(519.67,533.64)(517.94,71.74)
\psline[linestyle=dotted](59.51,78.66)(213.47,189.38)
\psline[linestyle=dotted](671.91,187.65)(213.47,189.38)
\psline[linewidth=1.6pt,linestyle=dashed,dash=2pt 2pt,linecolor=ffttqq](367.79,129.76)(287.86,76.93)
\psline(64.7,537.1)(517.94,71.74)
\psline(213.47,189.38)(517.94,71.74)
\psline(213.47,189.38)(64.7,537.1)
\psline[linestyle=dashed,dash=2pt 2pt,linecolor=ffttqq](265.37,268.96)(367.79,129.76)
\psline[linestyle=dashed,dash=2pt 2pt,linecolor=ffttqq](265.37,268.96)(517.94,71.74)
\psline[linestyle=dashed,dash=2pt 2pt,linecolor=ffttqq](671.91,639.17)(789.55,737.77)
\psline[linestyle=dashed,dash=2pt 2pt,linecolor=ffttqq](671.91,187.65)(798.19,213.6)
\psline[linestyle=dashed,dash=2pt 2pt,linecolor=ffttqq](697.86,71.74)(517.94,71.74)
\parametricplot[linecolor=qqcctt]{1.1086592906632604}{1.7}{1*85.36*cos(t)+0*85.36*sin(t)+531.78|0*85.36*cos(t)+1*85.36*sin(t)+34.55}
\parametricplot[linecolor=qqcctt]{3.7507913937560695}{6.024414415215889}{1*65.73*cos(t)+0*65.73*sin(t)+363.11|0*65.73*cos(t)+1*65.73*sin(t)+615.81}
\parametricplot[linecolor=qqcctt]{3.259325228393887}{4.379769288810777}{1*121.81*cos(t)+0*121.81*sin(t)+710.56|0*121.81*cos(t)+1*121.81*sin(t)+654.38}
\rput[tl](332.93,387){$F_3$}
\rput[tl](39.61,570){$C$}
\rput[tl](203.81,175){$B$}
\rput[tl](46.07,65){$F_0$}
\rput[tl](269.75,65){$F_{1}$}
\rput[tl](362.83,120){$F_2$}
\rput[tl](486.44,65){$F_0' = A$}
\rput[tl](710.62,90){$\sigma_3$}
\rput[tl](803.71,235){$\sigma_2 \sigma_3$}
\rput[tl](789.49,765){$\sigma_2$}
\rput[tl](356.37,780){$\sigma_1$}
\rput[tl](247.77,300){$E$}
\psline[linecolor=qqcctt](630,562)(650,562)
\psline[linecolor=qqcctt](630,562)(645,542)
\rput[tl](339.56,559){$\mathbf{\qqcctt{<}}$}
\rput[tl](537.37,127){$\mathbf{\qqcctt{>}}$}
\rput[tl](400,765){4-fold rotation}
\rput[tl](764.92,711.09){4-fold rotation}
\rput[tl](783.02,199.1){half-turn}
\rput[tl](662.78,60.76){4-fold rotation}
\rput[tl](655.03,675){$D$}
\psdots[dotsize=5pt 0,dotstyle=*,linecolor=yellow](64.7,537.1)
\psdots[dotsize=5pt 0,dotstyle=*,linecolor=yellow](213.47,189.38)
\rput[bl](218.03,201.69){}
\psdots[dotsize=5pt 0,dotstyle=*,linecolor=fffftt](517.94,71.74)
\rput[bl](523.15,84.03){}
\psdots[dotstyle=*,linecolor=ffttqq](671.91,639.17)
\psdots[dotstyle=*,linecolor=ffttqq](265.37,268.96)
\psdots[dotstyle=*,linecolor=ffttqq](59.51,78.66)
\psdots[dotstyle=*,linecolor=ffttqq](287.86,76.93)
\psdots[dotstyle=*,linecolor=ffttqq](367.79,129.76)
\psdots[dotstyle=*,linecolor=ffttqq](365.71,364.1)
\end{pspicture*}
\caption{A patch of the semiregular tiling $\mathcal{T}$ derived from the cubical tiling $\mathcal{C}$. Shown are a tetrahedal tile with vertices $A,B,C,D$, and one eighths of an octahedral tile (vertices $A,B,C$) centered at the base vertex $F_{0}=z$. The axes of the three generating rotations $\sigma_1,\sigma_2,\sigma_3$ for the rotation subgroup of $\mathcal{C}$ are indicated, as is the fundamental tetrahedron for this subgroup with vertices $F_{0},F_{0}',F_{2},F_{3}$. The plane through $A,B,C$ dissects this fundamental tetrahedron into two smaller tetrahedra, each becoming a fundamental tetrahedron for the full symmetry group of a tile, namely the tetrahedron with vertices $F_{0},F_{0}',F_{2},E$ for the octahedral tile and the tetrahedron with vertices $F_{0}',F_{2},E,F_{3}$ for the tetrahedral tile.  \label{calT}
}
\end{figure}
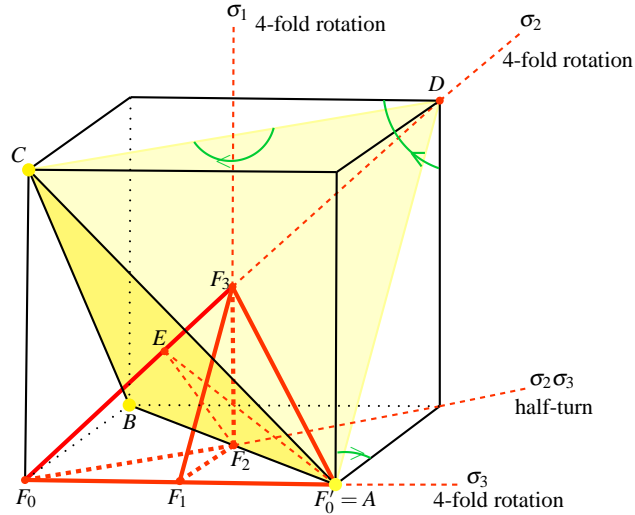

The relationship of the semiregular tiling $\mathcal{T}$ with the (regular) cubical tiling $\mathcal{C}:=\{4,3,4\}$ in $\mathbb{E}^3$ will serve as the blueprint for our construction. As the edge graph of $\mathcal{C}$ is bipartite, we can color the vertices {\em red\/} or {\em yellow} such that adjacent vertices receive different colors. Then the octahedral tiles of $\mathcal{T}$ can be viewed as the vertex-figures of $\mathcal{C}$ at the red vertices, each spanned by the yellow vertices adjacent to the corresponding red vertex. The complement in $\mathbb{E}^3$ of the union of all these octahedral tiles gives rise to the family of tetrahedral tiles of $\mathcal{T}$, each inscribed in a cube of $\mathcal{C}$; each cube contributes exactly one tetrahedral tile, such that the tetrahedral tiles in adjacent cubes share a common edge. 

Now let $\mathcal{P}$ be any finite $4$-polytope, let $\mathcal{K}$ be a vertex-transitive polyhedron of type $\{4,q\}$,
and let $\mathcal{L}$ be a polyhedron of type $\{q,r\}$. Suppose that all facets of $\mathcal{P}$ are isomorphic to $\mathcal{K}$, and that all vertex-figures are isomorphic to $\mathcal{L}$. Thus $\mathcal{P}$ is equivelar of type $\{4,q,r\}$.

Further, suppose the edge graph of $\mathcal{P}$ is bipartite, with vertices colored red or yellow such that adjacent vertices have different colors. Let $R$ and $Y$, respectively, denote the sets of red or yellow vertices of $\mathcal{P}$. Then every edge circuit in $\mathcal{P}$ has even length, and the edge graph of $\mathcal{K}$ is also bipartite. It is convenient to require two additional ``lattice-like" conditions to hold. First, both $\mathcal{P}$ and $\mathcal{L}$ should be vertex-describable, so that we may identify faces with their vertex-sets; then, as a facet of a vertex-describable polytope, $\mathcal{K}$ must also be vertex-describable. Second, any two opposite vertices of a $2$-face of $\mathcal{P}$ should not be opposite vertices of another $2$-face of $\mathcal{P}$. Later we impose strong symmetry conditions on $\mathcal{K}$, $\mathcal{L}$ and $\mathcal{P}$, but for now we work in the present generality.

We now derive from $\mathcal{P}$ a new $4$-polytope $\mathcal{P}^a$, where ``$a$" indicates ``alternating". The vertex-set of $\mathcal{P}^a$ is the set $Y$ of yellow vertices of $\mathcal{P}$. Our description of the faces of $\mathcal{P}^a$ is in terms of their vertex-sets, that is, subsets of $Y$. In particular, the edges of $\mathcal{P}^a$ are the diagonals of the (square) $2$-faces of $\mathcal{P}$ that connect yellow vertices; more precisely, $\{v,w\}$ is a $1$-face of $\mathcal{P}^a$ if and only if $v,w\in Y$ and $v,w$ are opposite vertices in a $2$-face of $\mathcal{P}$. Then, by our assumption on the $2$-faces of $\mathcal{P}$, any two vertices of $\mathcal{P}^a$ are joined by at most one edge.
 
The $2$-faces of $\mathcal{P}^a$ are the vertex-figures, within the facets of $\mathcal{P}$, at the red vertices of these facets; more precisely, $\{v_{1},\ldots,v_{q'}\}$ is a $2$-face of $\mathcal{P}^a$ if and only if there exists a facet $F$ of $\mathcal{P}$ with a red vertex $v$ such that $\{v_{1},\ldots,v_{q'}\}$ is the set of (yellow) vertices, labeled in cyclic order, of the vertex-figure at $v$ in $F$. Clearly, the $2$-faces of $\mathcal{P}^a$ must be $q$-gons, that is, $q'=q$ in each case. Alternatively, we can describe the $2$-faces of $\mathcal{P}^a$ as the $2$-faces of the vertex-figures at red vertices in $\mathcal{P}$. 

The facets of $\mathcal{P}^a$ are of two kinds and correspond to either a halved facet or the vertex figure at a red vertex of $\mathcal{P}$. Each facet $F$ of $\mathcal{P}$ gives rise to a facet $F^a$ of $\mathcal{P}^a$, of the {\em first kind\/}, obtained (as in Section~\ref{halv}) as the polyhedron whose $2$-faces are the vertex-figures of $F$ at the red vertices; when $F$ is viewed as a map of type $\{4,q\}$ on a surface, $F^a$ is a map of type $\{q,q\}$ that can be drawn on the same surface. Note here that, by the vertex-transitivity of $\mathcal{K}$, the combinatorial structure of $F^a$ does not depend on which class of vertices in the bipartition of the vertex-set of $F$ is used as the vertex-set for $F^a$ (the two maps arising from the two possible choices of vertex-sets are related by duality, but they are isomorphic since $\mathcal{K}$ is vertex-transitive). Thus the facets $F^a$ of the first kind are mutually isomorphic, each to the map $\mathcal{K}^a$ of Section~\ref{halv}. The facets of $\mathcal{P}^a$ of the {\em second kind\/} are the vertex-figures, $\mathcal{P}\!/\!v$, of $\mathcal{P}$ at the red vertices,~$v$. 

For example, if $\mathcal{P}$ is the cubical tessellation $\mathcal{C}$ described earlier, then the facets of the first kind are tetrahedra $F^a=\{3,3\}$ inscribed in cubes $F$ of $\mathcal{C}$, and the facets of the second kind are the octahedral vertex-figures $\mathcal{C}\!/\!v = \{3,4\}$ of $\mathcal{C}$ at the red vertices. Thus, combinatorially, $\mathcal{P}^a = \mathcal{T}$, the semiregular tiling of $\mathbb{E}^3$ by tetrahedra and octahedra.

Incidence of faces in $\mathcal{P}^a$ is defined by inclusion of vertex-sets; that is, two faces of $\mathcal{P}^a$ are incident if and only if their vertex-sets (as subsets of the vertex-set of $\mathcal{P}$) are related by inclusion. Note that two facets of $\mathcal{P}^a$ can only be adjacent (share a $2$-face) if they are of different kinds, and that a facet $F^a$ of the first kind is adjacent to a facet $\mathcal{P}\!/\!v$ of the second kind if and only if $v$ is a vertex of $F$. Each edge of $\mathcal{P}^a$ is surrounded by four facets of $\mathcal{P}^a$, occurring in alternating fashion; more explicitly, if $\{v,w\}$ is an edge of $\mathcal{P}^a$ given by the diagonal of a $2$-face $G$ of $\mathcal{P}$, then these four facets are $F^a$, $\mathcal{P}\!/\!u$, $(F')^a$ and $\mathcal{P}\!/\!u'$, in this order, where $F$ and $F'$ are the two facets of $\mathcal{P}$ meeting at $G$, and $u,u'$ are the two vertices of $G$ distinct from $v$ and $w$. Thus $\mathcal{P}^a$ is alternating. 

The vertex-set of the vertex-figure $\mathcal{P}^a\!/\!v$ of $\mathcal{P}^a$ at a vertex $v$ (a yellow vertex of $\mathcal{P}$) consists of the vertices $w$ of $\mathcal{P}^a$ such that $\{v,w\}$ is an edge of $\mathcal{P}^a$. Combinatorially, $\mathcal{P}^a\!/\!v$ is the medial $Me(\mathcal{L})$ of the vertex-figure $\mathcal{L}$ of $\mathcal{P}$. To see this, in the above, replace the vertex $w$ of the edge $\{v,w\}$ by the ``midpoint" of that edge (this is equivalent to the ``center" of the respective $2$-face of $\mathcal{P}$ that determines that edge), and impose on this new vertex-set the same combinatorial structure as on the original vertex-set of $\mathcal{P}^a\!/\!v$. In the example of the semiregular tiling $\mathcal{T}$ of $\mathbb{E}^3$ the vertex-figures are cuboctahedra, occurring as medials of the octahedral vertex-figures of the cubical tiling $\mathcal{C}$ at yellow vertices.

Notice that the new polytope $\mathcal{P}^a$ has the same number of flags as the original polytope $\mathcal{P}$. In fact, the number of vertices of $\mathcal{P}^a$ is half that of $\mathcal{P}$, while the number of flags of the vertex-figures $Me(\mathcal{L})$ of $\mathcal{P}^a$ is twice that of the vertex-figures $\mathcal{L}$ or $\mathcal{P}$. Bear in mind our assumption that $\mathcal{P}$ is finite.

We now investigate the combinatorial symmetries of $\mathcal{P}^a$. First observe that $\mathcal{P}^a$ inherits all automorphisms of $\mathcal{P}$ that preserve colors of vertices. Observe here that, since the edge graph of $\mathcal{P}$ is bipartite and connected, an automorphism $\gamma$ of $\mathcal{P}$ maps the full set of yellow vertices $Y$ to itself if and only if $\gamma$ maps any yellow vertex to a yellow vertex. Let $\Gamma^{c}(\mathcal{P})$ denote the subgroup of $\Gamma(\mathcal{P})$ mapping $Y$ (and thus $R$) to itself. Clearly, $\Gamma^{c}(\mathcal{P})$ has index $1$ or $2$ in $\Gamma(\mathcal{P})$. Then it is immediately clear that $\Gamma^{c}(\mathcal{P})$ is a subgroup of $\Gamma(\mathcal{P}^a)$. In fact, the combinatorics of $\mathcal{P}^a$ is entirely derived from $Y$ and has been described in a $Y$-invariant fashion. 

With an eye on the hereditary property, we remark further that the vertex stabilizer $\Gamma_v(\mathcal{P})$ of a red vertex $v$ in $\Gamma(\mathcal{P})$ becomes a subgroup of the automorphism group of the corresponding facet $\mathcal{P}\!/\!v$ of $\mathcal{P}^a$. Similarly, for any facet $F$ of $\mathcal{P}$, the subgroup of color preserving automorphisms of $\Gamma(\mathcal{P})$, which is given by $\Gamma^{c}(\mathcal{P})\cap \Gamma(F/F_{-1})$,  becomes a subgroup of the automorphism group of the corresponding facet $F^a$ of $\mathcal{P}^a$.

Our remarks about $\Gamma^{c}(\mathcal{P})$ have immediate implications for the number of flag-orbits of $\mathcal{P}^a$. 

In particular, if $\mathcal{P}$ is regular, then $\Gamma^{c}(\mathcal{P})$ must have index $2$ as a subgroup of $\Gamma(\mathcal{P})$, and thus index $1$ or $2$ as a subgroup of $\Gamma(\mathcal{P}^a)$. To see this, note that the order of $\Gamma^{c}(\mathcal{P})$ is exactly half the number of flags of $\mathcal{P}$, and thus of $\mathcal{P}^a$. Hence $\mathcal{P}^a$ is regular or a $2$-orbit polytope in class $2_{\{0,1,2\}}$. In either case, $\mathcal{P}^a$ is hereditary (and regular-facetted). 

Similarly, if $\mathcal{P}$ is chiral, then $\Gamma^{c}(\mathcal{P})$ must have index $2$ as a subgroup of $\Gamma(\mathcal{P})$, and thus index $1$, $2$ or $4$ as a subgroup of $\Gamma(\mathcal{P}^a)$. Now the order of $\Gamma^{c}(\mathcal{P})$ is exactly a quarter of the number of flags of $\mathcal{P}$, and thus of $\mathcal{P}^a$. Now suppose $\mathcal{P}^a$ is hereditary. We show that then the facets and vertex-figures of $\mathcal{P}$ must be all regular or all chiral.

In fact, if the facets of the original polytope $\mathcal{P}$ are regular, each facet $F^a$ of $\mathcal{P}^a$ of the first  kind must also be regular and its full automorphism group must be a subgroup of $\Gamma(\mathcal{P}^a)$ (see Section~\ref{halv}); now since the combinatorial reflection symmetry in $F^a$ that takes a flag of $F^a$ to its $0$-adjacent flag also gives a similar such reflection symmetry in the adjacent facet $\mathcal{P}\!/\!v$ (say) of $\mathcal{P}^a$ meeting $F^a$ in the $2$-face of the flag, it follows that the vertex-figures of $\mathcal{P}$ must actually also be regular since they already have (at least) maximal symmetry by rotation. Similarly, if the vertex-figures of the original polytope $\mathcal{P}$ are regular, then the hereditary property of $\mathcal{P}^a$ implies that the full automorphism group $\Gamma(\mathcal{P}\!/\!v)$ of a facet $\mathcal{P}\!/\!v$ of $\mathcal{P}^a$ is a subgroup of $\Gamma(\mathcal{P}^a)$ containing a combinatorial reflection symmetry of $\mathcal{P}\!/\!v$ that takes a flag of $\mathcal{P}\!/\!v$ to its $0$-adjacent flag; as above, this reflection symmetry must induce a similar reflection symmetry in an adjacent facet $F^{a}$ (say) of $\mathcal{P}^a$ and hence force this facet to be regular, since it already has (at least) maximal symmetry by rotation. Thus, if the original polytope $\mathcal{P}$ is chiral, then $\mathcal{P}^a$ can be hereditary only if the facets and vertex-figures of $\mathcal{P}^a$ are all regular or all chiral. 

Conversely, if the facets and vertex-figures of a chiral polytope $\mathcal{P}$ are all regular or all chiral, then the new polytope $\mathcal{P}^a$ is hereditary, since each facet of either kind has all its automorphisms extended to the entire polytope $\mathcal{P}^a$. In particular, if the facets and vertex-figures of $\mathcal{P}$ are all regular, then $\mathcal{P}^a$ is regular-facetted and is either itself regular or a $2$-orbit polytope of type $2_{\{0,1,2\}}$. Otherwise, $\mathcal{P}^a$ is chiral-facetted and has $1$, $2$ or $4$ flag-orbits.

Now suppose $\mathcal{P}$ and all its facets and vertex-figures are chiral. Then recall from Section~\ref{flobfc} that the flag-orbits of the corresponding hereditary polytope $\mathcal{P}^a$ can be represented by one, two, or four flags from among $\Psi$, $\Psi^0$, $\Psi^{3}$, $\Psi^{3,0}$, where $\Psi$ is any flag of $\mathcal{P}^a$. First note that a pair of $0$-adjacent flags of $\mathcal{P}^a$ cannot possibly be equivalent under $\Gamma(\mathcal{P}^a)$, since otherwise the facet of $\mathcal{P}^a$ common to both flags would have to be regular, not chiral. Thus $\Psi,\Psi^0$ (resp. $\Psi^{3},\Psi^{3,0}$) are not equivalent under $\Gamma(\mathcal{P}^a)$, and $\mathcal{P}^a$ has $2$ or $4$ flag-orbits. Similarly, if the two kinds of facets of $\mathcal{P}^a$ are distinct (that is, non-isomorphic), then a pair of $3$-adjacent flags of $\mathcal{P}^a$ cannot possibly be equivalent either, since any automorphism of $\mathcal{P}^a$ taking a flag to its $3$-adjacent flag would provide an isomorphism between the two facets contained in these flags. Thus $\Psi,\Psi^{3}$ (resp. $\Psi^0,\Psi^{3,0}$) are non-equivalent and $\mathcal{P}^a$ must have $4$ flag-orbits. Note that the non-isomorphism condition on the two kinds of facets of $\mathcal{P}^a$ holds, for example, if their numbers of flags are distinct, that is, if the number of flags of $\mathcal{K}$ is not exactly twice that of $\mathcal{L}$.

Our main findings are summarized in the following theorem.

\begin{theorem}
\label{alter}
Let $\mathcal{P}$ be a finite regular or chiral $4$-polytope of type $\{\mathcal{K},\mathcal{L}\}$, where $\mathcal{K}$ and $\mathcal{L}$ are polyhedra of type $\{4,q\}$ and $\{q,r\}$, respectively. Suppose that the edge graph of $\mathcal{P}$ is bipartite, that $\mathcal{P}$ and $\mathcal{L}$ are vertex-describable, and that any two opposite vertices of a $2$-face of $\mathcal{P}$ are not opposite vertices of another $2$-face of $\mathcal{P}$. Then $\mathcal{P}^a$ is a finite alternating hereditary $4$-polytope with facets isomorphic to $\mathcal{L}$ or $\mathcal{K}^a$, and with vertex-figures isomorphic to the medial $Me(\mathcal{L})$ of $\mathcal{L}$. Every edge of $\mathcal{P}^a$ is surrounded by four facets, two of each kind occurring in an alternating fashion.  Moreover, $\mathcal{P}^a$ has the following hereditary properties.\\[.01in]
\indent$\!$ (a) If $\mathcal{K}$ and $\mathcal{L}$ are regular, then $\mathcal{P}^a$ is a regular-facetted hereditary polytope 
\indent\indent$\,$ 
and is either itself regular or a $2$-orbit polytope of type $2_{\{0,1,2\}}$.\\[.015in]
\indent$\!$ (b) If $\mathcal{K}$ and $\mathcal{L}$ are chiral, then $\mathcal{P}^a$ is a chiral-facetted hereditary polytope 
\indent\indent$\;$ 
with~$2$ or $4$ flag-orbits. If $\mathcal{L}$ and $\mathcal{K}^a$ are not isomorphic (for example, this 
\indent\indent$\;$
holds when $|\Gamma(\mathcal{L})|\neq |\Gamma(\mathcal{K})|/2$), then $\mathcal{P}^a$ has $4$ flag-orbits.\\[.015in]
In either case (a) or (b), the group of all color preserving automorphisms $\Gamma^{c}(\mathcal{P})$ of $\mathcal{P}$ is a subgroup of $\Gamma(\mathcal{P}^a)$ of index $1$ or $2$, with the same or twice the number of flag-orbits as $\Gamma(\mathcal{P}^a)$.
\end{theorem}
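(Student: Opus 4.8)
The plan is to regard this theorem as the consolidation of the construction carried out just above it, so that the proof reduces to three tasks: confirming that $\mathcal{P}^a$ is a genuine finite abstract $4$-polytope, recording the facet/vertex-figure/alternation data, and converting the description of $\Gamma^{c}(\mathcal{P})$ into an exact flag-orbit count. I would begin with polytopality. Finiteness is immediate, since the vertex set $Y$ of $\mathcal{P}^a$ is half that of $\mathcal{P}$. The partial order is inclusion of vertex-sets and the rank is forced by the face types (yellow vertices; diagonals of square $2$-faces as edges; vertex-figures at red vertices as $2$-faces; two kinds of facets). The diamond condition is checked rank by rank: at the bottom three ranks it is internal to the polyhedra $\mathcal{L}$, $\mathcal{K}^a$, and $Me(\mathcal{L})$; at the top it is exactly the statement, established above, that each $2$-face lies in precisely two facets, one of each kind, and that the co-face $F_4/F_1$ of every edge is a quadrangle traversing the four surrounding facets $F^a,\mathcal{P}\!/\!u,(F')^a,\mathcal{P}\!/\!u'$. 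Strong flag-connectedness is the one properly global axiom, and I would deduce it from the strong flag-connectedness of $\mathcal{P}$ together with the connectedness of each facet, translating flag-walks in $\mathcal{P}$ into flag-walks in $\mathcal{P}^a$.

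Next I would read off the structural claims from the construction. The facets of the first kind $F^a$ are each isomorphic to $\mathcal{K}^a$, independently of the chosen color-class, by the vertex-transitivity of $\mathcal{K}$; the facets of the second kind are the vertex-figures $\mathcal{P}\!/\!v$ at red vertices, each isomorphic to $\mathcal{L}$; and the vertex-figures of $\mathcal{P}^a$ are the medials $Me(\mathcal{L})$. The alternation of the two facet kinds around each edge is as shown in Section~\ref{halv} and the discussion preceding the statement. Heredity is likewise already in hand: in case (a) each regular facet $\mathcal{K}^a$ or $\mathcal{L}$ has its full group inside $\Gamma(\mathcal{P}^a)$, and in case (b) the same holds for the chiral facets; hence every automorphism of every facet extends to $\mathcal{P}^a$.

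The quantitative core is the orbit count, which I would run through the free action of automorphism groups on flags. A distinguished generator of $\Gamma(\mathcal{P})$ that turns a square $2$-face by one step sends a vertex to an adjacent one and hence reverses the two colors; since the edge graph is connected and bipartite, such a color-reversing automorphism exists and $\Gamma^{c}(\mathcal{P})$ has index exactly $2$ in $\Gamma(\mathcal{P})$. As $\mathcal{P}$ and $\mathcal{P}^a$ have the same number $N$ of flags and $\Gamma^{c}(\mathcal{P})$ acts freely, $\Gamma^{c}(\mathcal{P})$ has $N/|\Gamma^{c}(\mathcal{P})|$ flag-orbits on $\mathcal{P}^a$, namely two when $\mathcal{P}$ is regular and four when $\mathcal{P}$ is chiral. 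In case (a), $\mathcal{P}^a$ is regular-facetted and hereditary, so Theorem~\ref{2orbit} immediately gives that it is regular or a $2$-orbit polytope in class $2_{\{0,1,2\}}$. In case (b), $\mathcal{P}^a$ is chiral-facetted and hereditary, so the analysis behind Theorem~\ref{chirfac} gives two or four flag-orbits; when $\mathcal{L}\not\cong\mathcal{K}^a$ (in particular when $|\Gamma(\mathcal{L})|\neq|\Gamma(\mathcal{K})|/2$, so the facet flag-counts differ) no automorphism can interchange the two kinds of facets, a $3$-adjacent pair of flags is never equivalent, and the count is forced to four. Finally, comparing orders, $[\Gamma(\mathcal{P}^a):\Gamma^{c}(\mathcal{P})]$ equals the ratio of the number of $\Gamma^{c}(\mathcal{P})$-orbits to the number of $\Gamma(\mathcal{P}^a)$-orbits, which in every case is $1$ or $2$, yielding the closing assertion.

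The step I expect to be the main obstacle is the passage from the order comparison to the exact orbit numbers, i.e.\ showing that $\Gamma(\mathcal{P}^a)$ exceeds $\Gamma^{c}(\mathcal{P})$ by at most an index-$2$ automorphism interchanging the two facet kinds. Concretely, one must rule out ``exotic'' symmetries of $\mathcal{P}^a$ not induced by $\mathcal{P}$; this is precisely what guarantees that the chiral case never collapses below two orbits and that the non-isomorphism criterion forces four. Establishing strong flag-connectedness of $\mathcal{P}^a$ is the other point requiring genuine care, since all remaining ingredients are either local or were already secured in the discussion preceding the theorem.
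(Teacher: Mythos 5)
Your proposal is correct and follows essentially the same route as the paper, whose ``proof'' of Theorem~\ref{alter} is precisely the discussion preceding it: the same flag count (equal numbers of flags for $\mathcal{P}$ and $\mathcal{P}^a$), the same index argument for $\Gamma^{c}(\mathcal{P})$, the same appeal to heredity plus Theorems~\ref{2orbit} and \ref{chirfac}, and the same non-isomorphism argument forcing four orbits. Your added attention to polytopality and strong flag-connectedness of $\mathcal{P}^a$ is a point the paper leaves implicit, but it does not change the structure of the argument.
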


The construction summarized in the previous theorem is a rich source for interesting examples of chiral-facetted hereditary $4$-polytopes with $4$ flag-orbits. To begin with, suppose $\mathcal{P}$ is a finite chiral $4$-polytope of type $\{\mathcal{K},\mathcal{L}\}$ such that $\mathcal{K},\mathcal{L}$ are chiral and $\mathcal{K}^a,\mathcal{L}$ are non-isomorphic. There is a wealth of polytopes of this kind. Now, if the edge graph of $\mathcal{P}$ is bipartite, 
$\mathcal{P}$ and $\mathcal{L}$ are vertex-describable, and any two opposite vertices of a $2$-face of $\mathcal{P}$ are not opposite vertices of another $2$-face of $\mathcal{P}$, then Theorem~\ref{alter} applies and yields a chiral-facetted alternating $4$-polytope $\mathcal{P}^a$ which is hereditary and has $4$ flag-orbits. Thus we need to assure that these three conditions hold; the requirement of a bipartite edge graph seems to be the most severe condition among the three. In our examples described below we verified these conditions with \textsc{Magma}.
 
For example, starting with the universal $4$-polytope $\mathcal{P}=\{\{4,4\}_{1,3},\{4,4\}_{1,3}\}$, which has 50 vertices, 50 facets, and an automorphism group of size 2000, our construction yields a hereditary $4$-orbit polytope $\mathcal{P}^a$ which has two kinds of chiral facets, namely $\{4,4\}_{1,3}$ and $\{4,4\}_{1,2}$.  It can be seen, for example using \textsc{Magma}, that the universal $4$-polytope with the same facets but the enantiomorphic vertex-figures fails the conditions of Theorem~\ref{alter}, in that there exist two opposite vertices of a $2$-face which are opposite vertices of another $2$-face of that polytope.

\section{Extensions of hereditary polytopes}
\label{exther}

In this section we briefly discuss extension problems for hereditary polytopes. We begin with a generalization of the notion of a hereditary polytope.

Let $1 \leq j \leq n-1$. An $n$-polytope $\pp$ is said to be {\em $j$-face hereditary\/} if for each $j$-face $F$ of $\pp$, the automorphism group $\Gamma(F/F_{-1})$ of the section $F/F_{-1}$ is a subgroup of $\Gamma(\pp)$ (and hence of $\Gamma_{F}(\pp)$). Thus $\pp$ is $j$-face hereditary if every automorphism of a $j$-face $F$ extends to an automorphism of $\pp$.  Note that a hereditary polytope is $(n-1)$-face hereditary, or {\em facet hereditary}. 

A $j$-face hereditary polytope is {\em strongly $j$-face hereditary\/} if for each $j$-face $F$ of $\pp$, the automorphism group $\Gamma(F/F_{-1})$ is a subgroup of $\Gamma(\pp)$ acting trivially on the \textit{co-face} $F_{n}/F$; then $\Gamma(F/F_{-1})$ is the stabilizer of a flag of $F_{n}/F$ in $\Gamma_{F}(\pp)$. Thus, for a strongly $j$-face hereditary polytope, every automorphism of a $j$-face $F$ extends to a particularly well-behaved automorphism of $\pp$, namely one which fixes every face of $\pp$ in the co-face of $F$ in $\pp$. 

The (vertex) truncated tetrahedron is a $1$-face (or edge-) hereditary polyhedron which is not $2$-face hereditary. The perpendicular bisectors of its edges are mirrors of reflection, but no geometric symmetry or combinatorial automorphism can rotate the vertices of a single face by one step. This example is a $3$-orbit polyhedron.

Note that every $2$-orbit $n$-polytope in a class $2_I$ with $\{0,1,\ldots,j-1\}\subseteq I$ is a strongly $j$-face hereditary polytope with regular $j$-faces. This follows directly from the definition of the class $2_I$. For example, a $2$-orbit polytope of rank $4$ and type $2_{\{0,1\}}$ is $2$-face hereditary; it may also be $3$-face hereditary, but not a priori so.
 
Now the basic question arises whether or not each hereditary $n$-polytope occurs as a facet of an $(n-1)$-face hereditary $(n+1)$-polytope; or more generally, whether or not each $j$-face hereditary $n$-polytope occurs as a $j$-face of a $k$-face hereditary $(n+1)$-polytope, for any $j\leq k\leq n$.

In this context the following result is of interest. 

\begin{theorem}
\label{twoP}
Let $\mathcal{K}$ be a finite $j$-face hereditary $n$-polytope for some $j=1,\ldots,n-1$, and let $\mathcal{K}$ be vertex-describable. Then $\mathcal{K}$ is the vertex-figure of a vertex-transitive finite $(j+1)$-face hereditary $(n+1)$-polytope.  
\end{theorem}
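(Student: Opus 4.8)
The plan is to take $\mathcal{P} := 2^{\mathcal{K}}$, the cube-like polytope of Section~\ref{chirfacher}, and show it does everything asked. Since $\mathcal{K}$ is a finite vertex-describable $n$-polytope, $2^{\mathcal{K}}$ is a finite abstract $(n+1)$-polytope, and the properties already recorded give almost all of the conclusion for free: $\Gamma(2^{\mathcal{K}})$ acts vertex-transitively, every vertex-figure of $2^{\mathcal{K}}$ is isomorphic to $\mathcal{K}$, and the vertex stabilizers are copies of $\Gamma(\mathcal{K})$ sitting inside $\Gamma(2^{\mathcal{K}}) \cong C_2 \wr \Gamma(\mathcal{K}) \cong C_2^{\,v} \rtimes \Gamma(\mathcal{K})$ via (\ref{gr2k}), where $v$ is the number of vertices of $\mathcal{K}$. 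Thus the only thing left to verify is that $2^{\mathcal{K}}$ is $(j+1)$-face hereditary.

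To this end, recall from (\ref{fep}) that the $(j+1)$-faces of $2^{\mathcal{K}}$ are exactly the faces $F(\varepsilon)$ with $F$ a $j$-face of $\mathcal{K}$ and $\varepsilon \in 2^V$, and that the section $F(\varepsilon)/F_{-1}$ is isomorphic to $2^{\mathcal{F}}$, where $\mathcal{F} := F/F_{-1}$. Since $\mathcal{F}$ is a section of the vertex-describable polytope $\mathcal{K}$, it is itself finite and vertex-describable, so (\ref{gr2k}) applies to it and gives $\Gamma(F(\varepsilon)/F_{-1}) \cong \Gamma(2^{\mathcal{F}}) \cong C_2 \wr \Gamma(\mathcal{F}) \cong C_2^{\,w} \rtimes \Gamma(\mathcal{F})$, where $w$ is the number of vertices of $F$. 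I must show that each element of this group is the restriction of an automorphism of $2^{\mathcal{K}}$. It suffices to treat a single $j$-face $F$ with $\varepsilon = o := (0,\ldots,0)$: the sign-change (``translation'') automorphism $(\eta_i) \mapsto (\eta_i + \varepsilon_i)$ of $2^{\mathcal{K}}$ carries the face $F(o)$ to $F(\varepsilon)$ and conjugates the corresponding section automorphism groups into one another, so handling $F(o)$ handles all $F(\varepsilon)$; and every $(j+1)$-face of $2^{\mathcal{K}}$ is of this form.

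Put $G := F(o)$ and identify its vertex-set with $2^{V_F}$, where $V_F \subseteq V$ is the vertex-set of $F$. I would split $\Gamma(2^{\mathcal{F}})$ according to the semidirect factors. The sign-change factor $C_2^{\,w}$, consisting of the maps $(\eta_i)_{i\in V_F} \mapsto (\eta_i + \delta_i)_{i \in V_F}$, extends trivially: the ambient sign-change subgroup $C_2^{\,v} \leq \Gamma(2^{\mathcal{K}})$ contains a sign change acting on the coordinates in $V_F$ by $\delta$ and fixing the rest, and this stabilizes $G$ and restricts to the required map. For the factor $\Gamma(\mathcal{F})$, this is where the hypothesis is used: since $\mathcal{K}$ is $j$-face hereditary, each $\phi \in \Gamma(\mathcal{F}) = \Gamma(F/F_{-1})$ is the restriction of some $\tilde{\phi} \in \Gamma_F(\mathcal{K}) \leq \Gamma(\mathcal{K})$. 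Viewing $\tilde{\phi}$ as an element of the factor $\Gamma(\mathcal{K}) \leq \Gamma(2^{\mathcal{K}})$, it acts on $2^V$ by permuting coordinates according to its action on $V$; as $\tilde{\phi}$ fixes $F$ it permutes $V_F$ within itself (via $\phi$) and fixes $o$, hence stabilizes $G = F(o)$ and restricts on the section $G/F_{-1}$ to the coordinate permutation induced by $\phi$, which is precisely the image of $\phi$ in $\Gamma(2^{\mathcal{F}})$.

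Since the two factors generate $\Gamma(2^{\mathcal{F}}) = \Gamma(G/F_{-1})$, it follows that $\Gamma(G/F_{-1})$ is a subgroup of $\Gamma(2^{\mathcal{K}})$, and hence $2^{\mathcal{K}}$ is $(j+1)$-face hereditary. I expect the main obstacle to be the compatibility check in the previous paragraph, namely confirming that the extension $\tilde{\phi}$ guaranteed by the $j$-face hereditary property of $\mathcal{K}$, once transported into $\Gamma(2^{\mathcal{K}})$ through the embedding (\ref{gr2k}), really does restrict to the intended automorphism $\phi$ of the section $2^{\mathcal{F}}$ rather than to some other element; everything else is bookkeeping with the face structure (\ref{fep}) and the group description (\ref{gr2k}).
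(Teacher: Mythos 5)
Your proposal is correct and follows essentially the same route as the paper: both take $2^{\mathcal{K}}$, identify its $(j+1)$-faces with copies of $2^{\mathcal{F}}$ for $j$-faces $F$ of $\mathcal{K}$, and embed $\Gamma(2^{\mathcal{F}})\cong C_2^{\,v(\mathcal{F})}\rtimes\Gamma(\mathcal{F})$ into $\Gamma(2^{\mathcal{K}})\cong C_2^{\,v}\rtimes\Gamma(\mathcal{K})$ using the $j$-face hereditary hypothesis to place $\Gamma(\mathcal{F})$ inside $\Gamma(\mathcal{K})$. Your version merely makes explicit the bookkeeping (reduction to $\varepsilon=o$, the two semidirect factors, and the restriction check) that the paper leaves implicit.
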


\begin{proof}
We employ the $2^\mathcal{K}$ construction described in Section~\ref{chirfacher}. Since $\mathcal{K}$ is a vertex-describable finite $n$-polytope, $2^\mathcal{K}$ is a vertex-transitive finite $(n+1)$-polytope all of whose vertex-figures are isomorphic to $\mathcal{K}$. Every $(j+1)$-face of $2^\mathcal{K}$ is isomorphic to a $(j+1)$-polytope $2^\mathcal{F}$, where $\mathcal{F}:=F/F_{-1}$ is the $j$-polytope given by a $j$-face $F$ as $\mathcal{K}$.  Moreover, $\Gamma(2^\mathcal{K}) \cong C_{2}^{v}\rtimes \Gamma(\mathcal{K})$, where $v$ is the number of vertices of $\mathcal{K}$; similarly, $\Gamma(2^\mathcal{F}) \cong C_{2}^{v(\mathcal{F})}\rtimes \Gamma(\mathcal{F})$, where $v(\mathcal{F})$ is the number of vertices of $\mathcal{F}$ (that is, the number of vertices of $F$ in $\mathcal{K}$).
In particular, the automorphism group of any $(j+1)$-face $2^\mathcal{F}$ of $2^\mathcal{K}$ is a subgroup of $\Gamma(2^\mathcal{K})$ if $\mathcal{K}$ is $j$-face hereditary, since then $\Gamma(\mathcal{F})$ is a subgroup of $\Gamma(\mathcal{K})$. Thus $2^\mathcal{K}$ is a $(j+1)$-face hereditary $(n+1)$-polytope if $\mathcal{K}$ is a $j$-face hereditary $n$-polytope.
\qed\end{proof}

When $j=n-1$ we have the following immediate consequence.

\begin{corollary}
\label{jface2k}
Each finite vertex-describable hereditary $n$-polytope is the vertex-figure of a vertex-transitive finite hereditary $(n+1)$-polytope.
\end{corollary}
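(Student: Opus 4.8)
The plan is to deduce this directly from Theorem~\ref{twoP} by specializing the face-rank parameter to its maximal value and then unwinding the terminology. First I would recall the identity recorded in Section~\ref{bano}: for an $n$-polytope, being \emph{hereditary} is by definition the same as being \emph{facet hereditary}, i.e. $(n-1)$-face hereditary. Hence a finite vertex-describable hereditary $n$-polytope $\mathcal{K}$ is exactly a finite vertex-describable $(n-1)$-face hereditary $n$-polytope, so the hypotheses of Theorem~\ref{twoP} are met with $j=n-1$.

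Next I would invoke Theorem~\ref{twoP} at this value of $j$. It produces a vertex-transitive finite $((n-1)+1)$-face hereditary $(n+1)$-polytope having $\mathcal{K}$ as its vertex-figure; concretely this polytope is $2^{\mathcal{K}}$, whose vertex-figures are all isomorphic to $\mathcal{K}$ by the properties reviewed before Theorem~\ref{prop2k}. The only remaining point is to read the label ``$n$-face hereditary $(n+1)$-polytope'': since an $(n+1)$-polytope has its facets at rank $n$, being $n$-face hereditary is precisely being facet hereditary, that is, hereditary. Substituting this reading into the conclusion of Theorem~\ref{twoP} gives exactly the statement of the corollary.

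There is essentially no obstacle, as all the content is carried by Theorem~\ref{twoP}; the corollary is simply its $j=n-1$ instance. The one thing to handle carefully is the off-by-one bookkeeping between the rank of a polytope and the rank of its facets, which is precisely what makes ``$(n-1)$-face hereditary $n$-polytope'' coincide with ``hereditary $n$-polytope'' on the input side and ``$n$-face hereditary $(n+1)$-polytope'' coincide with ``hereditary $(n+1)$-polytope'' on the output side. Once these two identifications are made explicit, the corollary follows immediately.
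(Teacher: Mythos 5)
Your proposal is correct and matches the paper exactly: the corollary is stated there as the immediate $j=n-1$ specialization of Theorem~\ref{twoP}, using precisely the identification of ``hereditary'' with ``facet hereditary'' on both the input $n$-polytope and the output $(n+1)$-polytope. The off-by-one bookkeeping you flag is the only content, and you handle it correctly.
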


Theorem~\ref{twoP} and its proof are good sources for interesting examples of hereditary polytopes. For instance, if $\mathcal{K}$ is the truncated tetrahedron, which is $1$-face hereditary but not $2$-face hereditary, then $2^\mathcal{K}$ is a $2$-face hereditary $4$-polytope which is not $3$-face hereditary. In fact, the facets of $2^\mathcal{K}$ are of two kinds, $3$-cubes $\{4,3\}=2^{\{3\}}$ and orientable regular maps $\{4,6\,|\,4,4\}=2^{\{6\}}$ of genus $9$ (see \cite[p. 261]{arp}); however, not all automorphisms of facets of the latter kind extend to automorphisms of $2^\mathcal{K}$ (otherwise $\mathcal{K}$ would have to be $2$-hereditary). Similar examples of arbitrary higher ranks can be constructed by iterating the $2^\mathcal{K}$ construction. For example, when $\mathcal{K}$ is the truncated tetrahedron, $2^{2^\mathcal{K}}$ is a $3$-face, but not $4$-face, hereditary $5$-polytope.

Note that a further generalization of hereditary polytopes employs sections rather than faces. For $0 \leq i < j \leq n-1$, an $n$-polytope $\pp$ is said to be {\em $(i,j)$-section hereditary\/} (resp. {\em strongly $(i,j)$-section hereditary\/}) if for each section $G/F$, with $F$ an $i$-face and $G$ a $j$-face with $F<G$, the group $\Gamma(G/F)$ of $G/F$ is a subgroup of $\Gamma(\pp)$ (resp. fixing, in addition, each face in both $F/F_{-1}$ and $F_{n}/G$). 
\medskip

\section{Conclusion}
This paper established the basic theory of hereditary polytopes.  One should pursue these ideas further by considering some of the following problems, which have been brought to light by our work.   

As a first example, one could examine if there exist hereditary polytopes whose $i$-faces are all themselves non-regular hereditary polytopes ($i \geq 3$).  In other words, given any hereditary polytope $\pp$, can another hereditary polytope be built which has $\pp$ as its facets?  This question is open even when $\pp$ is of rank 3. 

In this paper we considered polytopes where the automorphism group of each facet is a subgroup of the full automorphism group of the polytope.  
It would also be of interest to study ``chirally hereditary" polytopes, that is, those polytopes which are not hereditary, but have the property that each rotational symmetry of a facet extends to a global symmetry.   For example, an interesting class of such objects is the chiral polytopes with regular facets - which includes all chiral maps.

Additionally, it would be of interest to investigate the idea of geometrically hereditary polytopes.  For example in $\mathbb{E}^3$, can one classify the $i$-face transitive geometrically hereditary polyhedra, that is, those with symmetry group inheriting all isometries of their polygonal faces?  The rhombic dodecahedron is an example of a 2-face transitive geometrically hereditary polyhedron. (For a survey on related questions for convex polyhedra see also \cite{mar}.)

\begin{acknowledgement}
A substantial part of this article was written while we visited the Fields Institute for extended periods of time during the Thematic Program on Discrete Geometry and Application in Fall 2011. We greatly appreciated the hospitality of the Fields Institute and are very grateful for the support we received. Mark Mixer was Fields Postdoctoral Fellow in Fall 2011. Egon Schulte was also supported by NSF-grant DMS--0856675, and Asia Ivi\'{c} Weiss by NSERC. Finally, we wish to thank Peter McMullen and Barry Monson for helpful comments.
\end{acknowledgement}

\vskip.5in

\end{document}